\newtheorem{thm}{Theorem}[section]
\newtheorem{cor}[thm]{Corollary}
\newtheorem{lem}[thm]{Lemma}
\theoremstyle{definition}
\theoremstyle{remark}
\newtheorem*{rmk}{Remark}
\begin{document}

\title[Gauss sums of some matrix groups over $\Bbb Z/n\Bbb Z$
]{Gauss sums of some matrix groups over $\Bbb Z/n\Bbb Z$
}

\author{Su Hu}
\address{Department of Mathematics, South China University of Technology, Guangzhou 510640, China} \email{mahusu@scut.edu.cn}

\author{Guoxing He}\address{Department of Applied Mathematics, China Agricultural University, Beijing 100083, China}\email{guoxing\_{}he@cau.edu.cn}

\author{Yingtong Meng}\address{Department of Applied Mathematics, China Agricultural University, Beijing 100083, China}\email{Hallie@cau.edu.cn}

\author{Yan Li*}
\address{Department of Applied Mathematics, China Agricultural
University, Beijing 100083, China} \email{liyan\_00@cau.edu.cn}

 \thanks{*Corresponding author}
\subjclass[2010]{11A07, 11A25}
 \keywords
 {General linear group;
Special linear group; Residue class ring $\Bbb Z/n \Bbb Z$; Gauss sum; Ramanujan sum; Hyper-Kloosterman sum.}

\begin{abstract}In this paper, we will explicitly calculate Gauss sums for the general linear groups and the special linear groups over $\Bbb Z_n$, where $\Bbb Z_n=\Bbb Z/n \Bbb Z$ and $n>0$ is an integer. For $r$ being a positive integer, the formulae of Gauss sums for ${\rm GL}_r(\Bbb Z_n)$ can be expressed in terms of classical Gauss sums over $\Bbb Z_n$, while the formulae of Gauss sums for ${\rm SL}_r(\Bbb Z_n)$ can be expressed in terms of hyper-Kloosterman sums over $\Bbb Z_n$.
As an application, we count
the number of $r\times r$ invertible  matrices over $\Bbb Z_n$ with given trace by using the the formulae of Gauss sums for ${\rm GL}_r(\Bbb Z_n)$ and the orthogonality of Ramanujan sums.
\end{abstract}
\maketitle

\section{Introduction}

Classically, there are two kinds of Gauss sums: one is defined over $\Bbb{F}_{q}$, i.e., the finite field with $q$ elements, and the other one is defined over $\Bbb Z_n$, i.e., the residue class ring of $\Bbb Z$ modulo $n\Bbb Z$. For the case of $q=n=p$ being a prime, these two kinds of Gauss sums coincide, and are defined by
\begin{equation}\label{f1}\begin{aligned}G(\Bbb F_p, \chi,\lambda)
&=\sum_{a\in{\Bbb {F}}_{p}^{*}}\chi(a)\lambda(a),\end{aligned}\end{equation}
where $\chi$ is a multiplicative character of $\Bbb {F}_{p}^{*}$, i.e., the group of units of $\Bbb {F}_p$ and $\lambda$ is an additive character of $\Bbb {F}_p$. It is well known that Gauss initially used $G(\Bbb F_p, \chi,\lambda)$ with $\chi$ being a quadratic character, to prove the quadratic reciprocity law. For the general case, these two kinds of Gauss sums behave differently and need to be treated separately (see p.30 of \cite{Cohen}).

The Gauss sums are ubiquitous in number theory. For example, they naturally occur in the Stickelberger's theorem and the functional equations of Dirichlet $L$-functions. Therefore, the Gauss sums attract many
researcher's interests and have many generalizations.

The Gauss sums for classical groups over a finite field have been extensively studied by D.S.Kim et al. in a series of papers \cite{Kim1,Kim2,Kim3,Kim4,Kim5, Kim6, Kim7, Kim8}. 
  In \cite{Kim2}, Kim got the explict formulae of Gauss sums for the general linear group ${\rm GL}_r(\Bbb {F}_q)$ and the special linear group ${\rm SL}_r(\Bbb {F}_q)$ by using the Bruhat decomposition. Indeed, the Gauss sums for ${\rm GL}_r(\Bbb {F}_q)$ and ${\rm SL}_r(\Bbb {F}_q)$ are defined by
\begin{equation}\label{f3}\begin{aligned}G({\rm GL}_r(\Bbb {F}_q),\chi,\lambda)
&= \sum_{X\in{{\rm GL}_r({\Bbb {F}}_{q})}}\chi({\rm det}\ X)\lambda({\rm tr}\ X),\end{aligned}\end{equation}
\begin{equation}\label{f4}\begin{aligned}G({\rm SL}_r(\Bbb {F}_q),\lambda)
&= \sum_{X\in{{\rm SL}_r({\Bbb {F}}_{q})}}\lambda({\rm tr}\ X),\end{aligned}\end{equation}
where $\lambda$ is an additive character of $\Bbb {F}_q$ and $\chi$ is a multiplicative character of $\Bbb {F}_q^{*}$.
In \cite{Kim2}, Kim proved that for nontrivial $\lambda$,
\begin{equation}\label{f5}\begin{aligned}G({\rm GL}_r(\Bbb {F}_q),\chi,\lambda)
&=q^{r(r-1)/2}G(\Bbb {F}_q,\chi,\lambda)^r,\end{aligned}\end{equation}
\begin{equation}\label{f6}\begin{aligned}G({\rm SL}_r(\Bbb {F}_q),\lambda)
&=q^{r(r-1)/2}{K}_r(\Bbb {F}_q,\lambda),\end{aligned}\end{equation}
where 
 $G\left(\Bbb {F}_q,\chi,\lambda\right)$ is the classical Gauss sum over $\Bbb F_q$, defined similarly as (\ref{f1}), and
\begin{equation}\label{f7}\begin{aligned}K_r(\Bbb F_q,\lambda)
&=\sum_{\substack{x_{1}x_{2}\cdots x_{r}=1\\ x_{1},x_{2},\ldots, x_{r}\in\Bbb F_q}}\lambda(x_{1}+x_{2}+\cdots+x_{r}),\end{aligned}\end{equation}
is the hyper-Kloosterman sum over $\Bbb F_q$.

As Kim remarked, the formulae (\ref{f5}) and (\ref{f6}) already appeared in the work of Eichler \cite{Eichler} and Lamprecht \cite{Lamprecht} (see the introduction of \cite{Kim2}). Fulman \cite{Fulman} also got the formulae (\ref{f5}) by using the cycle index techniques.

 In \cite{LH}, Li and Hu studied the Gauss sums for ${\rm GL}_r(\Bbb F_q)$ and ${\rm SL}_r(\Bbb F_q)$ in a more general setting and got the explicit formulae for
\begin{equation}\label{f8}\begin{aligned}G({\rm GL}_r(\Bbb F_q),U,\chi,\lambda)=\sum_{X\in {\rm GL}_r(\Bbb F_q)}\chi({\rm det}\ X)\lambda(U\cdot X),\end{aligned}\end{equation}
and
\begin{equation}\label{f9}\begin{aligned}G({\rm SL}_r(\Bbb F_q),U,\lambda)=\sum_{X\in {\rm SL}_r(\Bbb F_q)}\lambda(U\cdot X),\end{aligned}\end{equation}
by a different method, where $U\in{{\rm M}_r(\Bbb F_q)}$ is an arbitrary $r\times r$ matrix over $\Bbb F_q$ and $U\cdot X={\rm tr}\ U^t X$ is the inner product of $U$ and $X$. Note that for a fixed non-trivial additive character $\lambda$, as $U$ varies over ${\rm M}_r(\Bbb F_q)$, $\lambda(U\cdot\ \ \ )$ runs over all additive characters of ${\rm M}_r(\Bbb F_q)$.
 As an application, they \cite{LH} calculated $N_\beta$, the number of matrices in ${\rm GL}_r(\Bbb F_q)$ with trace $\beta$, where $\beta\in\Bbb F_q$. Explicitly, we have
\begin{equation}\label{f10}\begin{aligned}
 \ \ \ \ \ N_\beta=&\left\{
\begin{array}{llll} \displaystyle q^{n(n-1)/2-1}\cdot\left(\prod_{i=1}^{n}(q^i-1)-(-1)^n\right) &\ &\mathrm{if}\ \beta\neq0\\
\displaystyle q^{n(n-1)/2-1}\cdot\left(\prod_{i=1}^{n}(q^i-1)+(-1)^n(q-1)\right) &\ &\mathrm{if}\ \beta=0
\end{array}\right.
\end{aligned}\end{equation}

Gauss sums (and more generally exponential sums) for matrix groups can be used to study the uniform distribution property for matrix groups (e.g., see \cite{Sh}, \cite{HL13}, \cite{Perret} and \cite{Shpar}).
Also they can  be applied in coding theory (see \cite{Kim9} and \cite{Ravagnani1,Ravagnani2}).

The aim of this paper is to generalize formulae (\ref{f5}), (\ref{f6}), (\ref{f10}) to general linear groups and special linear group over $\Bbb Z_n$.

From now on, we always assume that $\chi$ is a multiplicative character of $\Bbb Z_n^*$, i.e., the unit group of $\Bbb Z_n$, and $\lambda$ is an additive character of $\Bbb Z_n$. Explicitly, $\lambda$ can be uniquely written as
$$\lambda(x)={\rm exp}\left(\frac{2\pi\sqrt{-1}ax}{n}\right),\ {\rm where}\ x\in\Bbb Z_n,\ {\rm\ for\ some}\ a\in\Bbb Z_n.$$ The Gauss sums for the general linear group ${\rm GL}_r(\Bbb Z_n)$ and the special linear group ${\rm SL}_r(\Bbb Z_n)$ are defined as
\begin{equation}\label{f11}\begin{aligned}G({\rm GL}_r(\Bbb Z_n),\chi,\lambda)=\sum_{X\in {\rm GL}_r(\Bbb Z_n)}\chi({\rm det}\ X)\lambda({\rm tr}\ X),\end{aligned}\end{equation}
\begin{equation}\label{f12}\begin{aligned}G({\rm SL}_r(\Bbb Z_n),\lambda)=\sum_{X\in {\rm SL}_r(\Bbb Z_n)}\lambda({\rm tr}\ X),\end{aligned}\end{equation}
Note that for $r=1$, equation (\ref{f11}) is just the definition of classical Gauss sums for $\Bbb Z_n$, i.e.
\begin{equation}\label{f13}\begin{aligned}G(\Bbb Z_n,\chi,\lambda)=\sum_{X\in{\Bbb Z_n^*}}\chi(x)\lambda(x),\end{aligned}\end{equation}

In this paper, we will calculate the Gauss sums $G({\rm GL}_r(\Bbb Z_n),\chi,\lambda)$ and $G({\rm SL}_r(\Bbb Z_n),\lambda)$ explicitly (see Theorems \ref{Thm2}, \ref{Thm3} and \ref{Thm5} below). Note that the expression of $G({\rm SL}_r(\Bbb Z_n),\lambda)$ involves the hyper-Kloosterman sums, which are previously studied by a lot of researchers (e.g. see \cite{Luo}, \cite{Fisher}  \cite{Smith}, \cite{LiHZ} and \cite{Ye1, Ye2, Ye3}).

The key ingredient in calculation of (\ref{f11}) and (\ref{f12}) is averaging such sums over Borel subgroup, i.e. the subgroups consisting of upper triangular matrices. This method is similar to that of \cite{LH} and is originally inspired by \cite{Sh}.

Note that, Maeda \cite{Maeda} explicitly determine the following kind of Gauss sums for ${\rm GL}_2(\Bbb Z_{p^m})$:
\begin{equation}\label{T.Meada}\begin{aligned}G({\rm GL}_2(\Bbb Z_{p^m}),\Psi,\lambda)=\sum_{X\in {\rm GL}_2(\Bbb Z_{p^m})}\Psi(X)\lambda({\rm tr}\ X),\end{aligned}\end{equation}
where $\Psi$ is any irreducible character of group representations of ${\rm GL}_2(\Bbb Z_{p^m})$, $p$ is an odd prime and $m\geqslant2$ is an integer (The case of $m=1$ is contained in the results of \cite{Kondo} by Kondo, previously). In the abstract of \cite{Maeda}, Maeda remarked that ``While there are several studies of the Gauss sums on finite algebraic groups defined over a finite field, this paper seems to be the first one which determines the Gauss sums on a matrix group over a finite ring."

Finally, as an application, we count the number of matrices in ${\rm GL}_r(\Bbb Z_n)$ with trace $\beta$, where $\beta\in\Bbb Z_n$ (see Theorem \ref{Thm6} and Theorem \ref{Thm8}). The calculation relies on the orthogonality of Ramanujan sums (e.g. see \cite{Toth}), which can be viewed as a special kind of Gauss sums $G(\Bbb Z_n,\chi,\lambda)$ with trivial multiplicative character $\chi$.

\section{Preliminaries on Gauss sums over $\mathbb Z_n$}
Write the additive character
\begin{equation}\label{f14}\begin{aligned}\lambda(x)={\rm exp}\left(\frac{2\pi\sqrt{-1}ax}{n}\right),\quad {\rm with}\ d={\rm gcd}(a,n),\end{aligned}\end{equation}
for some $a\in\Bbb Z$, where $x\in\Bbb Z_n$ and ${\rm gcd}(\ ,\ )$ represents the greatest common divisor. Let $f$ be the conductor of the multiplicative character $\chi$, i.e. $f$ is the smallest positive integer such that $\chi$ factors through $\Bbb Z_f^*$.

First, we collect some results on Gauss sums over $\Bbb Z_n$ from Cohen's book (see p.31-33 and p.94-95 of \cite{Cohen}).
\begin{equation}\label{f15}\begin{aligned}
 \ \ \ \ \ G(\Bbb Z_n,\chi,\lambda)=&\left\{
\begin{array}{llll} \displaystyle 0 &\ &\mathrm{if}\ f\nmid n/d\\
\displaystyle \frac{\varphi(n)}{\varphi(n/d)}G(\Bbb Z_{n/d},\chi',\lambda) &\ &\mathrm{if}\ f\mid n/d
\end{array}\right.
,
\end{aligned}\end{equation}
where $\chi'$ is the character of $\Bbb Z_{n/d}^*$ induced by $\chi$, i.e.
\begin{equation}\label{f16}\begin{aligned}\chi'(c\ {\rm mod}\ n/d)=\chi(c\ {\rm mod}\ n),\quad{\rm for}\ {\rm gcd}(c,n)=1,\end{aligned}\end{equation}
and $\varphi$ is the Euler's totient function.

In equation (\ref{f15}), the case of $f\nmid n/d$ is just Proposition 2.1.40 of \cite{Cohen}. For $f\mid n/d$, both $\lambda$ and $\chi$ can be defined modulo $n/d$. We have
\begin{equation}\label{Cohen}\begin{aligned}\sum_{x\in{\Bbb Z_{n}^*}}\chi(x)\lambda(x)=\frac{\varphi(n)}{\varphi(n/d)}\sum_{y\in{\Bbb Z_{n/d}^*}}\chi'(y)\lambda(y)\end{aligned}\end{equation}
as $\varphi(n)/\varphi(n/d)$ many $x\in\Bbb Z_n^*$ maps to one $y\in\Bbb Z_{n/d}^*$. This yields the desired identity.

From equation (\ref{f15}), we only need to consider the case $d={\rm gcd}(a,n)=1$. Under this condition,
\begin{equation}\label{f17}\begin{aligned}G(\Bbb Z_n,\chi,\lambda)=\overline{\chi(a)}\sum_{x\in\Bbb Z_n^*}\chi(x){\rm exp}\left(\frac{2\pi \sqrt{-1}x}{n}\right),\end{aligned}\end{equation}
(see Proposition 2.1.39 of \cite{Cohen}).

Now, for $a=1$,
\begin{equation}\label{f18}\begin{aligned}G(\Bbb Z_n,\chi,\lambda)=\mu(\frac{n}{f})\chi_f(\frac{n}{f})G(\Bbb Z_f,\chi_f,\lambda_f),\end{aligned}\end{equation}where $\chi_f$ is the character of $\Bbb Z_f^*$ such that

{\centering\[\chi_f(c\ {\rm mod}\ f)=\chi_f(c\ {\rm mod}\ n)\quad {\rm for}\ {\rm gcd}(c,n)=1;\]}
{\centering\[\lambda_f(x)={\rm exp}(\frac{2\pi ix}{f})\quad {\rm for}\ x\in\Bbb Z_f;\]}

\leftline{and $\mu$ is the M$\rm\ddot{o}$bius function (see exercise 12 on page 95 of \cite{Cohen}).}

It is also known that
\begin{equation}\label{f19}\begin{aligned}|G(\Bbb Z_f,\chi_f,\lambda_f)|=f^\frac{1}{2}\end{aligned}\end{equation}
(see Proposition 2.1.45 of \cite{Cohen}).

\section{Main results}
\begin{thm}\label{Thm1}Assume the order of $\lambda$ is $n$, i.e. $d={\rm gcd}(a,n)=1$ in equation \eqref{f14}. Then
\begin{equation}\label{f20}\begin{aligned}G({\rm GL}_r(\Bbb Z_n),\chi,\lambda)=G(\Bbb Z_n,\chi,\lambda)^r n^{\frac{r(r-1)}{2}}.\end{aligned}\end{equation}
\end{thm}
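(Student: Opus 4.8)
The plan is to prove the identity by induction on $r$, the base case $r=1$ being exactly the definition (\ref{f13}) of the classical Gauss sum $G(\mathbb{Z}_n,\chi,\lambda)$. For the inductive step I would exploit the block structure of $\mathrm{GL}_r(\mathbb{Z}_n)$, organizing the sum according to the maximal parabolic of type $(r-1,1)$ and its unipotent radical, so as to peel off one row and column. The guiding principle, already visible over $\mathbb{F}_q$ in (\ref{f5}), is that the $r(r-1)/2$ strictly upper triangular directions should contribute the power of $n$, while the diagonal torus contributes one factor $G(\mathbb{Z}_n,\chi,\lambda)$ per coordinate. Before doing this I would reduce, via the Chinese Remainder Theorem and the isomorphism $\mathrm{GL}_r(\mathbb{Z}_n)\cong\prod_{p^e\,\|\,n}\mathrm{GL}_r(\mathbb{Z}_{p^e})$ together with the multiplicativity of trace, determinant, $\chi$, $\lambda$, of the classical Gauss sum, and of both sides of (\ref{f20}), to the case $n=p^e$ of a prime power. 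Then $\mathbb{Z}_n$ is a local ring with residue field $\mathbb{F}_p$, and the dichotomy ``unit versus non-unit entry'' is unambiguous.

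The heart of the argument is a vanishing phenomenon playing the role of discarding the nontrivial Bruhat cells $BwB$ with $w\neq 1$. Fixing all but one off-diagonal entry of $X$ lying in a position that is strictly below the diagonal modulo $p$, I would sum that entry against $\lambda$; since $\gcd(a,n)=1$ the character $\lambda$ has full order $n$, so any surjective $\mathbb{Z}_n$-linear form composed with $\lambda$ sums to zero over $\mathbb{Z}_n$. Concretely, whenever $\bar X\in\mathrm{GL}_r(\mathbb{F}_p)$ fails to lie in the Borel subgroup one can produce a free coordinate on which $\det$ depends invertibly while $\mathrm{tr}$ depends linearly and nontrivially, killing that contribution. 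The model computation is $r=2$: the matrices with unit $(2,1)$-entry $c$ are parametrized by free $a,d\in\mathbb{Z}_n$ and units $c,\delta=\det$ (with $b=c^{-1}(ad-\delta)$ determined), so they contribute $\big(\sum_{c\in\mathbb{Z}_n^*}1\big)\big(\sum_{\delta\in\mathbb{Z}_n^*}\chi(\delta)\big)\big(\sum_{a\in\mathbb{Z}_n}\lambda(a)\big)\big(\sum_{d\in\mathbb{Z}_n}\lambda(d)\big)=0$. Hence only matrices that are upper triangular modulo $p$ survive.

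On the surviving locus, in the field case $e=1$ the survivors are exactly the Borel subgroup $B$: the strictly upper triangular entries range freely over $\mathbb{Z}_n$ and enter neither trace nor determinant, producing the factor $n^{r(r-1)/2}=|N|$, while the diagonal entries over $\mathbb{Z}_n^*$ factor as $\prod_{i=1}^r\big(\sum_{t_i\in\mathbb{Z}_n^*}\chi(t_i)\lambda(t_i)\big)=G(\mathbb{Z}_n,\chi,\lambda)^r$ because $\det$ and $\mathrm{tr}$ split across the diagonal of a triangular matrix; this recovers Kim's formula. The main obstacle is that over $\mathbb{Z}_{p^e}$ with $e\geq2$ the survivors are only upper triangular \emph{modulo} $p$: their sub-diagonal entries lie in the ideal $(p)$ rather than vanishing, and these entries feed back into the determinant through Schur complements in a way that is invisible over a field. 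Controlling this is the crux; I expect to handle it by a secondary induction on $e$ (or on $r$ inside the parabolic), repeatedly invoking the full-order hypothesis on $\lambda$ to show that summing out the $(p)$-valued sub-diagonal entries reproduces the same clean product and leaves the factor $n^{r(r-1)/2}$ undisturbed. Establishing this stability — that passing from $\mathbb{F}_p$ to $\mathbb{Z}_{p^e}$ merely multiplies by the expected powers of $p$ with no cross terms — is where the real care is needed, and where the argument genuinely uses that $\lambda$ has full order rather than being merely nontrivial.
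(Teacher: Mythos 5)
Your proposal is incomplete precisely at the point you yourself identify as the crux, and what you defer there is not a routine verification. Over $\mathbb{Z}_{p^e}$ with $e\geqslant 2$, after your vanishing argument kills the cells containing a \emph{unit} sub-diagonal entry, the surviving locus consists of matrices that are upper triangular only modulo $p$: the sub-diagonal entries range over the ideal $(p)$, the determinant is no longer $\prod_i x_{ii}$ but carries Schur-complement corrections, and consequently $\chi(\mathrm{det}\,X)\lambda(\mathrm{tr}\,X)$ does not separate entry by entry. Your $r=2$ model computation only parametrizes the cell where the $(2,1)$-entry $c$ is a unit (there $b=c^{-1}(ad-\delta)$ makes sense); the cells with $v_p(c)=1,\dots,e-1$, which have no analogue over a field, are exactly the ones left unhandled, and for them the constraint $\det X=\delta$ cannot be solved for $b$ at all. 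Promising a ``secondary induction on $e$'' to show that summing out these $(p)$-valued entries ``reproduces the same clean product'' names the difficulty without resolving it, so as written the proof has a genuine gap. (Your CRT reduction and the unit-cell vanishing are fine, but they are the easy parts.)

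The paper's proof shows the difficulty is an artifact of decomposing in the $X$-variable. Instead of stratifying $\mathrm{GL}_r(\mathbb{Z}_n)$, it averages the sum \eqref{f11} over the Borel subgroup ${\rm B}_r(\mathbb{Z}_n)$, using that $X\mapsto BX$ permutes $\mathrm{GL}_r(\mathbb{Z}_n)$ for each fixed $B$, and then swaps the two summations. The free strictly-upper entries $b_{ij}$ of $B$ now couple \emph{linearly} to the entries $x_{ji}$ of $X$, and since $\lambda$ has full order $n$, the inner sum $\sum_{b\in\mathbb{Z}_n}\lambda(bx_{ji})$ vanishes unless $x_{ji}=0$ \emph{exactly} in $\mathbb{Z}_n$ (not merely modulo $p$), as in \eqref{f22}. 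Hence the survivors are honestly upper triangular invertible matrices; the troublesome ``upper triangular mod $p$ only'' locus never appears, the diagonal sums factor as $G(\mathbb{Z}_n,\chi,\lambda)^r$, and the count $|{\rm B}_r(\mathbb{Z}_n)|=\varphi(n)^r n^{r(r-1)/2}$ finishes the proof in a few lines, with no CRT reduction, no induction on $r$ or $e$, and no cell analysis. If you wish to salvage your outline, the repair is essentially to import this trick: introduce an auxiliary average over the Borel (or its unipotent radical) so that the exponential sum is taken over free linear coordinates, rather than attempting to parametrize and sum the non-unit cells directly.
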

\begin{proof}
Let ${\rm B}_r(\Bbb Z_n)$ be the Borel subgroup of ${\rm GL}_r(\Bbb Z_n)$, i.e., the group of upper triangular invertible matrices over $\Bbb Z_n$.
Averaging equation (\ref{f11}) over ${\rm B}_r(\Bbb Z_n)$ and changing the order of summation, we get
\begin{equation}\label{f20(1)}\begin{aligned}&\ \ \sum_{X\in {\rm GL}_r(\Bbb Z_n)}\chi({\rm det}\ X)\lambda({\rm tr}\ X)\\=&\frac{1}{\varphi(n)^rn^{\frac{r(r-1)}{2}}}\sum_{B\in {\rm B}_r(\Bbb Z_n)}\sum_{X\in {\rm GL}_r(\Bbb Z_n)}\chi({\rm det}\ BX)\lambda({\rm tr}\ BX)\\=&\frac{1}{\varphi(n)^rn^{\frac{r(r-1)}{2}}}\sum_{X\in {\rm GL}_r(\Bbb Z_n)}\chi({\rm det}\ X)\sum_{B\in {\rm B}_r(\Bbb Z_n)}\chi({\rm det}\ B)\lambda({\rm tr}\ BX).\end{aligned}\end{equation}

Writing
\begin{equation}\label{f20(2)}\begin{aligned}B=(b_{ij})\in {\rm B}_r(\Bbb Z_n), X=(x_{ij})\in {\rm GL}_r(\Bbb Z_n)\end{aligned}\end{equation}
and using the multiplicative property of characters, we get

\begin{equation}\label{f21}\begin{aligned}&\ \ \frac{1}{\varphi(n)^r n^{\frac{r(r-1)}{2}}}\sum_{X\in {\rm GL}_r(\Bbb Z_n)}\chi({\rm det}\ X)\sum_{(b_{ij})\in {\rm B}_r(\Bbb Z_n)}\prod\limits_{i=1}^{r}\chi(b_{ii})\lambda(b_{ii}x_{ii})\prod\limits_{i<j}\lambda(b_{ij}x_{ji})\\=& \frac{1}{\varphi(n)^r n^{\frac{r(r-1)}{2}}}\sum_{X\in {\rm GL}_r(\Bbb Z_n)}\chi({\rm det}\ X)\prod\limits_{i=1}^{r}\sum_{b_{ii}\in{\Bbb Z_n^*}}\chi(b_{ii})\lambda(b_{ii}x_{ii})\prod\limits_{i<j}\sum_{b_{ij}\in\Bbb Z_n}\lambda(b_{ij}x_{ji})\end{aligned}\end{equation}

By the assumption ${\rm gcd}(a,n)=1$, we know that $\lambda(x_{ji}\cdot\ \ \ )$ is a nontrivial character of $\Bbb Z_n$ if $x_{ji}\neq 0$ in $\Bbb Z_n$, where $1\leqslant i<j\leqslant r$. Therefore, we have
\begin{equation}\label{f22}\begin{aligned}
\ \ \ \ \ \sum_{b_{ij}\in{\Bbb Z_n}}\lambda(b_{ij}x_{ji})=&\left\{
\begin{array}{llll}\displaystyle n &\ &\mathrm{if}\ x_{ji}=0\\
\displaystyle 0 &\ &\mathrm{otherwise}
\end{array}\right.
,
\end{aligned}\end{equation}
by the orthogonality of characters, where $1\leqslant i<j\leqslant r$.

From (\ref{f22}), the summation item for $X\in {\rm GL}_r(\Bbb Z_n)$ in \eqref{f21} is nonzero only when $X\in {\rm B}_r(\Bbb Z_n)$ happens. Then, substituting (\ref{f22}) into (\ref{f21}), we obtain
\begin{equation}\label{f22(1)}
\begin{split}
&G({\rm Gl}_r(\mathbb Z_n),\chi,\lambda)\\
=&\frac{1}{\varphi(n)^rn^{\frac{r(r-1)}{2}}}\sum_{X\in {\rm B}_r(\mathbb{Z}_n)}\chi({\rm det}\ X)\prod^r_{i=1}\sum_{b_{ii}\in\mathbb{Z}_n^*}\chi(b_{ii})\lambda(b_{ii}x_{ii})\cdot n^{\frac{r(r-1)}{2}}\\
=&\frac{1}{\varphi(n)^r}\sum_{X\in {\rm B}_r(\mathbb{Z}_n)}\prod^r_{i=1}\sum_{b_{ii}\in\mathbb{Z}_n^*}\chi(b_{ii}x_{ii})\lambda(b_{ii}x_{ii})\\
=&G(\mathbb{Z}_n,\chi,\lambda)^rn^{\frac{r(r-1)}{2}}
\end{split}
\end{equation}
\end{proof}
To treat the general case $d={\rm gcd}(a,n)\neq1$, we need the following lemmas. Let $n=p_1^{m_1}\cdots p_s^{m_s}$ be the prime factorization of $n$. Then by the Chinese Remainder Theorem, it can be seen that
\begin{equation}\label{f23}
{\rm GL}_r(\mathbb{Z}_n)\simeq {\rm GL}_r(\mathbb{Z}_{p_{1}^{m_1}})\times {\rm GL}_r(\mathbb{Z}_{p_{2}^{m_2}})\times \cdots \times {\rm GL}_r(\mathbb{Z}_{p_{s}^{m_s}}).
\end{equation}
\begin{lem}\label{lemma1} Assume $p$ is a prime and $m$ is a positive integer. Then the cardinality of ${\rm GL}_r(\Bbb Z_{p^m})$ is
\begin{equation}\label{f24}
|{\rm GL}_r(\mathbb{Z}_{p^m})|=p^{mr^2}\prod_{i=1}^r(1-p^{-i}).
\end{equation}
\end{lem}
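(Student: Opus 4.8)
The plan is to relate $\mathrm{GL}_r(\mathbb{Z}_{p^m})$ to $\mathrm{GL}_r(\mathbb{F}_p)$ through the entrywise reduction-modulo-$p$ map. First I would note that a matrix $A\in \mathrm{M}_r(\mathbb{Z}_{p^m})$ is invertible if and only if $\det A$ is a unit of $\mathbb{Z}_{p^m}$, which in turn holds if and only if $p\nmid \det A$, i.e. if and only if the reduction $\bar A\in \mathrm{M}_r(\mathbb{F}_p)$ is invertible. Consequently the reduction map $\pi:\mathrm{GL}_r(\mathbb{Z}_{p^m})\to \mathrm{GL}_r(\mathbb{F}_p)$ is a well-defined group homomorphism. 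It is surjective: given $\bar A\in \mathrm{GL}_r(\mathbb{F}_p)$, lift its entries arbitrarily to $\mathbb{Z}_{p^m}$ to obtain some $A$; then $\det A\equiv \det \bar A\not\equiv 0\pmod p$, so $A$ is invertible and $\pi(A)=\bar A$.

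Next I would compute the kernel. An element of $\ker\pi$ has the form $I+pB$ and is automatically invertible, since its determinant is $\equiv 1\pmod p$. Because $pB$ depends only on $B$ modulo $p^{m-1}$, the elements of $\ker\pi$ are in bijection with $\mathrm{M}_r(\mathbb{Z}_{p^{m-1}})$, giving $|\ker\pi|=p^{r^2(m-1)}$. By the first isomorphism theorem it follows that $|\mathrm{GL}_r(\mathbb{Z}_{p^m})|=p^{r^2(m-1)}\,|\mathrm{GL}_r(\mathbb{F}_p)|$.

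Finally I would invoke the classical count $|\mathrm{GL}_r(\mathbb{F}_p)|=\prod_{i=0}^{r-1}(p^r-p^i)=p^{r(r-1)/2}\prod_{i=1}^r(p^i-1)$, obtained by counting ordered bases of $\mathbb{F}_p^r$. Multiplying and collecting powers of $p$ gives the exponent $r^2(m-1)+r(r-1)/2=mr^2-r(r+1)/2$, so that $|\mathrm{GL}_r(\mathbb{Z}_{p^m})|=p^{mr^2-r(r+1)/2}\prod_{i=1}^r(p^i-1)=p^{mr^2}\prod_{i=1}^r(1-p^{-i})$, as claimed. The only step requiring genuine care is the characterization of invertibility over $\mathbb{Z}_{p^m}$ in terms of the determinant being a $p$-adic unit, which underlies both the surjectivity and the kernel count; everything else is routine bookkeeping of exponents. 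Alternatively, one could argue by induction on $m$ via the reduction map $\mathrm{GL}_r(\mathbb{Z}_{p^m})\to \mathrm{GL}_r(\mathbb{Z}_{p^{m-1}})$, whose kernel $I+p^{m-1}\mathrm{M}_r(\mathbb{F}_p)$ has order $p^{r^2}$, taking the finite-field count as the base case.
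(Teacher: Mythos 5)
Your proof is correct, and it is more self-contained than the paper's: the paper disposes of this lemma by simply citing Proposition 11.15 of Rosen's \emph{Number Theory in Function Fields}, whereas you reconstruct the standard argument in full. Each step checks out: invertibility over $\mathbb{Z}_{p^m}$ is indeed equivalent to $p\nmid\det A$ (determinant a unit in a commutative ring), surjectivity of the reduction $\pi:\mathrm{GL}_r(\mathbb{Z}_{p^m})\to\mathrm{GL}_r(\mathbb{F}_p)$ follows by lifting entries, the kernel $I+p\,\mathrm{M}_r$ is correctly counted as $p^{r^2(m-1)}$ (every such matrix is automatically invertible since its determinant is $\equiv 1 \pmod p$, and $pB$ depends only on $B$ bmod $p^{m-1}$), and the exponent arithmetic $r^2(m-1)+r(r-1)/2=mr^2-r(r+1)/2$ is right, matching $p^{mr^2}\prod_{i=1}^r(1-p^{-i})$. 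Your argument is essentially what stands behind the cited result, so nothing is gained in generality, but it buys the reader independence from the reference; it also dovetails naturally with the paper's later use of the same reduction-map technique (surjectivity of $\pi$ and kernel counts appear again in the proofs of Theorems 3.4 and 3.7 of the paper, for the maps $\mathrm{GL}_r(\mathbb{Z}_n)\to\mathrm{GL}_r(\mathbb{Z}_{n/d})$ and $\mathrm{SL}_r(\mathbb{Z}_n)\to\mathrm{SL}_r(\mathbb{Z}_{n/d})$), so your inductive alternative via $\mathrm{GL}_r(\mathbb{Z}_{p^m})\to\mathrm{GL}_r(\mathbb{Z}_{p^{m-1}})$ would fit the paper's style equally well.
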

\begin{proof}See  Proposition 11.15 of \cite{Rosen} on page 182.
\end{proof}
\begin{lem}\label{lemma2}The cardinality of ${\rm GL}_r(\Bbb Z_n)$ is
\begin{equation}\label{f25}
|{\rm GL}_r(\mathbb{Z}_n)|=n^{r^2}\prod_{p\mid n}\prod^r_{i=1}(1-p^{-i}).
\end{equation}
\end{lem}
\begin{proof} Combine (\ref{f23}) and (\ref{f24}) together.
\end{proof}
\begin{thm}\label{Thm2} Let the order of $\lambda$ be $n/d$, i.e. $d={\rm gcd}(a,n)$ in \eqref{f14}.
Let $f$ be the conductor of $\chi$. Then, for $f|n/d$, we have
\begin{equation}\label{f25(1)}
G({\rm GL}_r(\mathbb{Z}_n),\chi,\lambda)=n^{\frac{r(r-1)}{2}}d^{\frac{r(r+1)}{2}}
\prod_{\mbox{\tiny$\begin{array}{c}
p\mid n\\
p\nmid n/d
\end{array}$}}\prod^r_{i=1}(1-p^{-i})G(\mathbb{Z}_{n/d},\chi',\lambda)^r
\end{equation}
where $\chi'$ is the character of $\Bbb Z_{n/d}^*$ satisfying \eqref{f16}. Otherwise, for $f\nmid n/d$, we have $G({\rm GL}_r(\Bbb Z_n),\chi,\lambda)=0.$
\end{thm}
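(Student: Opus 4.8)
The plan is to split the argument according to the conductor condition, treating the vanishing case $f\nmid n/d$ first and then the main formula for $f\mid n/d$.

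For the vanishing case, I would exploit the same symmetry that forces the classical Gauss sum \eqref{f15} to vanish. Since $f\nmid n/d$, the character $\chi$ is nontrivial on the kernel of the reduction $\mathbb{Z}_n^*\to\mathbb{Z}_{n/d}^*$, so there is a unit $u\equiv 1\pmod{n/d}$ with $\chi(u)\neq 1$. Put $D=\mathrm{diag}(u,1,\ldots,1)\in\mathrm{GL}_r(\mathbb{Z}_n)$. Left multiplication $X\mapsto DX$ permutes $\mathrm{GL}_r(\mathbb{Z}_n)$, scales the determinant by $u$, and replaces $\mathrm{tr}\,X$ by $ux_{11}+x_{22}+\cdots+x_{rr}$; since $u\equiv 1\pmod{n/d}$ and $\lambda$ has order $n/d$, the latter has the same $\lambda$-value as $\mathrm{tr}\,X$. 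Re-indexing the sum therefore gives $G(\mathrm{GL}_r(\mathbb{Z}_n),\chi,\lambda)=\chi(u)\,G(\mathrm{GL}_r(\mathbb{Z}_n),\chi,\lambda)$, and $\chi(u)\neq 1$ forces the sum to be $0$.

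For $f\mid n/d$, I would reduce to prime powers through the Chinese Remainder Theorem. Using \eqref{f23}, together with the fact that under CRT the determinant, the trace, and the characters $\chi,\lambda$ all factor into their $p$-components, the Gauss sum becomes a product $\prod_{p\mid n}G(\mathrm{GL}_r(\mathbb{Z}_{p^{m_p}}),\chi_p,\lambda_p)$, where $m_p=v_p(n)$ denotes the $p$-adic valuation. I would then evaluate each local factor, writing $m=v_p(n)$ and $m'=v_p(n/d)$. When $p\nmid n/d$ (so $m'=0$), the hypothesis $f\mid n/d$ forces $p\nmid f$, hence $\chi_p$ is trivial, and $\lambda_p$ is trivial as well; the factor is then just $|\mathrm{GL}_r(\mathbb{Z}_{p^m})|=p^{mr^2}\prod_{i=1}^r(1-p^{-i})$ by Lemma \ref{lemma1}. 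When $p\mid n/d$, both $\lambda_p$ (of order $p^{m'}$) and $\chi_p$ (of conductor dividing $p^{m'}$) factor through $\mathbb{Z}_{p^{m'}}$, so the summand depends only on $X\bmod p^{m'}$; the reduction $\mathrm{GL}_r(\mathbb{Z}_{p^m})\to\mathrm{GL}_r(\mathbb{Z}_{p^{m'}})$ has kernel $\{I+p^{m'}M\}$ of size $p^{(m-m')r^2}$, which by Lemma \ref{lemma1} equals the index, so the map is onto and all fibers contribute equally. This gives $p^{(m-m')r^2}G(\mathrm{GL}_r(\mathbb{Z}_{p^{m'}}),\chi_p',\lambda_p')$, to which Theorem \ref{Thm1} applies since $\lambda_p'$ has full order $p^{m'}$, yielding $p^{(m-m')r^2}(p^{m'})^{r(r-1)/2}G(\mathbb{Z}_{p^{m'}},\chi_p',\lambda_p')^r$.

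Finally I would reassemble the product. The classical Gauss sums recombine by multiplicativity into $\prod_{p\mid n/d}G(\mathbb{Z}_{p^{m'}},\chi_p',\lambda_p')^r=G(\mathbb{Z}_{n/d},\chi',\lambda)^r$, and the factors $\prod_{i}(1-p^{-i})$ over $p\nmid n/d$ collect into the stated product. The step I expect to demand the most care is the prime-power bookkeeping: one must check, prime by prime, that the accumulated exponent equals $v_p(n)\tfrac{r(r-1)}{2}+v_p(d)\tfrac{r(r+1)}{2}$, so that the prefactors multiply to exactly $n^{r(r-1)/2}d^{r(r+1)/2}$. Concretely, for $p\nmid n/d$ one has $mr^2=m\tfrac{r(r-1)}{2}+m\tfrac{r(r+1)}{2}$ with $v_p(d)=m$, while for $p\mid n/d$ one rewrites $(m-m')r^2+m'\tfrac{r(r-1)}{2}$ using $v_p(d)=m-m'$; both identities reduce to $\tfrac{r(r-1)}{2}+\tfrac{r(r+1)}{2}=r^2$. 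Verifying the kernel size and surjectivity of the reduction map is the other point needing attention, though it follows cleanly from Lemma \ref{lemma1}.
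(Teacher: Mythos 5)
Your proposal is correct, but it follows a genuinely different route from the paper. For the vanishing case $f\nmid n/d$, the paper re-runs its Borel-subgroup averaging from Theorem \ref{Thm1} to reach expression \eqref{f21} and then kills each diagonal factor $\sum_{b_{ii}\in\mathbb{Z}_n^*}\chi(b_{ii})\lambda(b_{ii}x_{ii})$ via the classical vanishing criterion \eqref{f15}; your twist argument $G=\chi(u)G$ with $D=\mathrm{diag}(u,1,\ldots,1)$, $u\equiv 1\pmod{n/d}$, $\chi(u)\neq 1$, is shorter and avoids repeating the averaging computation (and your verification that $\lambda(ux_{11})=\lambda(x_{11})$ is exactly right: $d\mid a$ and $(n/d)\mid(u-1)$ give $a(u-1)\equiv 0\pmod n$). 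For the main case $f\mid n/d$, the paper does a single \emph{global} reduction $\pi:\mathrm{GL}_r(\mathbb{Z}_n)\to\mathrm{GL}_r(\mathbb{Z}_{n/d})$, computes $|\ker\pi|=d^{r^2}\prod_{p\mid n,\,p\nmid n/d}\prod_{i=1}^r(1-p^{-i})$ from Lemma \ref{lemma2}, and applies Theorem \ref{Thm1} once, so the exponent identity $d^{r^2}(n/d)^{r(r-1)/2}=n^{r(r-1)/2}d^{r(r+1)/2}$ appears exactly once; you instead localize via CRT first and perform the same reduction-plus-Theorem-\ref{Thm1} step prime by prime, which forces the per-prime exponent bookkeeping you carry out (correctly — both local identities do reduce to $\frac{r(r-1)}{2}+\frac{r(r+1)}{2}=r^2$), and additionally requires noting that the local characters $\chi_p',\lambda_p'$ recombine under CRT multiplicativity into $G(\mathbb{Z}_{n/d},\chi',\lambda)^r$, a standard but worth-stating compatibility. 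The trade-off: the paper's global argument is cleaner and shorter (CRT is hidden inside Lemma \ref{lemma2}), while your local argument makes the structure at each prime explicit — in particular it isolates transparently why primes with $p\nmid n/d$ contribute the full group order $|\mathrm{GL}_r(\mathbb{Z}_{p^m})|$, which is exactly the source of the product $\prod_{p\mid n,\,p\nmid n/d}\prod_{i=1}^r(1-p^{-i})$ in \eqref{f25(1)}. Your surjectivity-and-fiber-count for $\mathrm{GL}_r(\mathbb{Z}_{p^m})\to\mathrm{GL}_r(\mathbb{Z}_{p^{m'}})$ is sound (invertibility is detected mod $p$, so every lift of an invertible matrix is invertible), matching the paper's ``clearly surjective'' claim for the global map.
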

\begin{proof} Similarly as in the proof of Theorem \ref{Thm1}, we have $G({\rm GL}_r(\Bbb Z_n),\chi,\lambda)$ equals to the right hand side of (\ref{f21}), i.e.
\begin{equation}\label{LiAdd1}\frac{1}{\varphi(n)^r n^{\frac{r(r-1)}{2}}}\sum_{X\in {\rm GL}_r(\Bbb Z_n)}\chi({\rm det}\ X)\prod\limits_{i=1}^{r}\sum_{b_{ii}\in{\Bbb Z_n^*}}\chi(b_{ii})\lambda(b_{ii}x_{ii})\prod\limits_{i<j}\sum_{b_{ij}\in\Bbb Z_n}\lambda(b_{ij}x_{ji})\end{equation}
 If $f \nmid n/d$, then by equation (\ref{f15}), the Gauss sums
\begin{equation}\label{f26}
\sum_{b_{ii}\in\mathbb{Z}_n^*}\chi(b_{ii})\lambda(b_{ii}x_{ii})=0,
\end{equation}
for $1\leqslant i\leqslant r$. Combining \eqref{LiAdd1} and (\ref{f26}), we get $G({\rm GL}_r(\Bbb Z_n),\chi,\lambda)=0$, for $f\nmid n/d$.

Now assume $f|n/d$. Both $\chi$ and $\lambda$ are defined modulo $n/d$. Consider the natural homomorphism,
\begin{equation}\label{f26(1)}
{\rm GL}_r(\mathbb{Z}_n)\xrightarrow{\pi}{\rm GL}_r(\mathbb{Z}_{n/d}):~X\mapsto X\mod n/d,~\mathrm{for}~X\in {\rm GL}_r(\mathbb{Z}_n)
\end{equation}
which is clearly surjective and whose kernel has the cardinality:
\begin{equation}\label{f27}
\begin{split}
|\ker\pi|&=|{\rm GL}_r(\mathbb{Z}_n)|/|{\rm GL}_r(\mathbb{Z}_{n/d})|\\
&=d^{r^2}\prod_{\mbox{\tiny$\begin{array}{c}
p\mid n\\
p\nmid n/d
\end{array}$}}\prod^r_{i=1}(1-p^{-i})
\end{split}
\end{equation}
by Lemma \ref{lemma2}.

Clearly, if $\pi(X)=\pi(X')$, i.e. $X\equiv X'\pmod{n/d}$, then $${\rm det}\ X\equiv {\rm det}\ X'\pmod{n/d},~\mathrm{and}~{\rm tr}\ X\equiv {\rm tr}\ X'\pmod{n/d}.$$
Therefore,
\begin{equation}\label{f28}
\begin{split}
\chi({\rm det}\ X)\lambda({\rm tr}\ X)&=\chi({\rm det}\ X')\lambda({\rm tr}\ X')=\chi'({\rm det}~ \pi(X))\lambda(\mathrm{tr}~\pi(X)).
\end{split}
\end{equation}

Combining (\ref{f11}), (\ref{f27}) and (\ref{f28}), we get

\begin{equation}\label{f29}
G({\rm GL}_r(\mathbb{Z}_n),\chi,\lambda)
=d^{r^2}
\prod_{\mbox{\tiny$\begin{array}{c}
p\mid n\\
p\nmid n/d
\end{array}$}}\prod^r_{i=1}(1-p^{-i})G({\rm GL}_r(\mathbb{Z}_{n/d}),\chi',\lambda).
\end{equation}

Substituting (\ref{f20}) into (\ref{f29}), we get

\begin{equation}\label{f30}
G({\rm GL}_r(\mathbb{Z}_n),\chi,\lambda)
=n^{\frac{r(r-1)}{2}}d^{\frac{r(r+1)}{2}}
\prod_{\mbox{\tiny$\begin{array}{c}
p\mid n\\
p\nmid n/d
\end{array}$}}\prod^r_{i=1}(1-p^{-i})G(\mathbb{Z}_{n/d},\chi',\lambda)^r
\end{equation}
which concludes the proof.
\end{proof}
Combining equation \eqref{f15} and Theorem \ref{Thm2}, we get the following uniform result.

\begin{thm}\label{Thm3} Let the order of $\lambda$ be $n/d$, i.e. $d={\rm gcd}(a,n)$ in \eqref{f14}. Then
\begin{equation}\label{f31}
\begin{split}
&G({\rm GL}_r(\mathbb{Z}_n),\chi,\lambda)\\=&n^{\frac{r(r-1)}{2}}d^{\frac{r(r-1)}{2}}
\prod_{\mbox{\tiny$\begin{array}{c}
p\mid n\\
p\nmid n/d
\end{array}$}}\prod^r_{i=1}(1-p^{-i})/(1-p^{-1}) G(\mathbb{Z}_{n},\chi,\lambda)^r.
\end{split}
\end{equation}
\end{thm}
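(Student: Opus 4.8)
The plan is to deduce this uniform formula directly from Theorem~\ref{Thm2} together with the factorization \eqref{f15} of the classical Gauss sum $G(\mathbb{Z}_n,\chi,\lambda)$, splitting into the same two cases $f\mid n/d$ and $f\nmid n/d$ that already appear in Theorem~\ref{Thm2}.

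First I would dispose of the degenerate case $f\nmid n/d$. Here Theorem~\ref{Thm2} already gives $G({\rm GL}_r(\mathbb{Z}_n),\chi,\lambda)=0$, while the first branch of \eqref{f15} gives $G(\mathbb{Z}_n,\chi,\lambda)=0$. Hence the right-hand side of \eqref{f31} also vanishes, because its factor $G(\mathbb{Z}_n,\chi,\lambda)^r$ is zero, and the identity holds trivially.

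Next, assuming $f\mid n/d$, the idea is to convert the factor $G(\mathbb{Z}_{n/d},\chi',\lambda)^r$ appearing in \eqref{f25(1)} into the desired $G(\mathbb{Z}_n,\chi,\lambda)^r$. By the second branch of \eqref{f15} we have $G(\mathbb{Z}_n,\chi,\lambda)=\frac{\varphi(n)}{\varphi(n/d)}G(\mathbb{Z}_{n/d},\chi',\lambda)$, so raising to the $r$-th power and solving gives $G(\mathbb{Z}_{n/d},\chi',\lambda)^r=\bigl(\tfrac{\varphi(n/d)}{\varphi(n)}\bigr)^{r} G(\mathbb{Z}_n,\chi,\lambda)^r$. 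Substituting this into \eqref{f25(1)} reduces the whole claim to checking that the scalar prefactors agree, that is, to the purely arithmetic identity
\[
d^{\frac{r(r+1)}{2}}\Bigl(\frac{\varphi(n/d)}{\varphi(n)}\Bigr)^{r}\prod_{\substack{p\mid n\\ p\nmid n/d}}\prod_{i=1}^{r}(1-p^{-i})
=d^{\frac{r(r-1)}{2}}\prod_{\substack{p\mid n\\ p\nmid n/d}}\prod_{i=1}^{r}\frac{1-p^{-i}}{1-p^{-1}}.
\]

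To verify this identity I would cancel the common product $\prod_{p\mid n,\,p\nmid n/d}\prod_{i=1}^{r}(1-p^{-i})$ from both sides and divide by $d^{r(r-1)/2}$, reducing it to $d^{r}\bigl(\tfrac{\varphi(n/d)}{\varphi(n)}\bigr)^{r}=\prod_{p\mid n,\,p\nmid n/d}(1-p^{-1})^{-r}$, and then take $r$-th roots to reach the single scalar identity $d\cdot\frac{\varphi(n/d)}{\varphi(n)}=\prod_{p\mid n,\,p\nmid n/d}(1-p^{-1})^{-1}$. This last equality follows from Euler's formula $\varphi(m)=m\prod_{p\mid m}(1-p^{-1})$: writing $\frac{\varphi(n/d)}{\varphi(n)}=\frac{1}{d}\cdot\frac{\prod_{p\mid n/d}(1-p^{-1})}{\prod_{p\mid n}(1-p^{-1})}$, the quotient of the two prime products reduces to $\prod_{p\mid n,\,p\nmid n/d}(1-p^{-1})^{-1}$, since the surviving primes are exactly those dividing $n$ but not $n/d$. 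The only step to carry out with care is this bookkeeping of which primes survive in the ratio $\varphi(n/d)/\varphi(n)$; once it is settled, everything else is formal substitution together with the exponent arithmetic $\tfrac{r(r+1)}{2}-\tfrac{r(r-1)}{2}=r$.
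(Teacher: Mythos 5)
Your proposal is correct and takes essentially the same route as the paper: the paper also disposes of $f\nmid n/d$ via Theorem \ref{Thm2}, and for $f\mid n/d$ it rewrites \eqref{f15} as $G(\mathbb{Z}_n,\chi,\lambda)=d\prod_{p\mid n,\,p\nmid n/d}(1-p^{-1})\,G(\mathbb{Z}_{n/d},\chi',\lambda)$ (its equation \eqref{f32}, which is exactly your Euler-product evaluation of $\varphi(n)/\varphi(n/d)$) and substitutes into \eqref{f25(1)}. The only cosmetic difference is the direction of substitution: you solve for $G(\mathbb{Z}_{n/d},\chi',\lambda)^r$ and check the prefactor identity $d^{\frac{r(r+1)}{2}}\bigl(\tfrac{\varphi(n/d)}{\varphi(n)}\bigr)^{r}\prod_{i=1}^{r}(1-p^{-i})=d^{\frac{r(r-1)}{2}}\prod_{i=1}^{r}\tfrac{1-p^{-i}}{1-p^{-1}}$ explicitly, which the paper leaves implicit.
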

\begin{proof} Let $f$ be the conductor of $\chi$. If $f\nmid n/d$, then the both sides of (\ref{f31}) are zero by Theorem \ref{Thm2}. Otherwise, by (\ref{f15}), we have
\begin{equation}\label{f32}
G(\mathbb{Z}_n,\chi,\lambda)=d\prod_{\mbox{\tiny$\begin{array}{c}
p\mid n\\
p\nmid n/d
\end{array}$}}(1-\frac{1}{p})G(\mathbb{Z}_{n/d},\chi',\lambda)
\end{equation}
Substituting (\ref{f32}) into \eqref{f25(1)}, we get the desired result.
\end{proof}
\begin{cor} Let $f$ be the conductor of $\chi$ and $n/d$ be the order of $\lambda$. Then
\begin{equation}\label{f33}
|G({\rm GL}_r(\mathbb{Z}_{n}),x,\lambda)|=n^{\frac{r(r-1)}{2}}d^{\frac{r(r+1)}{2}}f^{\frac{r}{2}}
\prod_{\mbox{\tiny$\begin{array}{c}
p\mid n\\
p\nmid n/d
\end{array}$}}\prod^r_{i=1}(1-p^{-i})
\end{equation}
if $f|n/d$; $n/(df)$ is squarefree and coprime to $f$. Otherwise $G({\rm GL}_r(\Bbb Z_n),\chi,\lambda)=0$.
\end{cor}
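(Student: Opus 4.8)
The plan is to read off the corollary from Theorem \ref{Thm2} by taking absolute values, which reduces everything to computing the modulus of the classical Gauss sum $G(\mathbb{Z}_{n/d},\chi',\lambda)$. First I would dispose of the degenerate case: if $f\nmid n/d$, then Theorem \ref{Thm2} already gives $G({\rm GL}_r(\mathbb{Z}_n),\chi,\lambda)=0$, which is exactly the ``otherwise'' clause. So I may assume $f\mid n/d$ for the remainder and start from the explicit formula \eqref{f25(1)}.

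In \eqref{f25(1)}, every factor other than $G(\mathbb{Z}_{n/d},\chi',\lambda)^r$ — namely $n^{r(r-1)/2}$, $d^{r(r+1)/2}$, and the product $\prod_{p\mid n,\,p\nmid n/d}\prod_{i=1}^r(1-p^{-i})$ — is a nonnegative real number, so it passes through the absolute value unchanged. Hence it remains only to evaluate $|G(\mathbb{Z}_{n/d},\chi',\lambda)|$. Writing $m=n/d$, I note that since $\lambda$ has order $m$, its reduction to $\mathbb{Z}_m$ is a primitive additive character $\exp(2\pi\sqrt{-1}\,a_0 x/m)$ with $\gcd(a_0,m)=1$; by \eqref{f17} the modulus is insensitive to $a_0$, so I may reduce to the case $a_0=1$ and apply the evaluation \eqref{f18}:
\[
G(\mathbb{Z}_{m},\chi',\lambda)=\mu(m/f)\,\chi_f(m/f)\,G(\mathbb{Z}_f,\chi_f,\lambda_f),
\]
where $\chi_f$ is the primitive character modulo $f$ underlying $\chi'$; this uses $f\mid m$, which guarantees that $\chi'$ genuinely has conductor $f$ as a character modulo $m$.

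Taking absolute values and invoking \eqref{f19}, i.e. $|G(\mathbb{Z}_f,\chi_f,\lambda_f)|=f^{1/2}$, I obtain $|G(\mathbb{Z}_m,\chi',\lambda)|=|\mu(m/f)|\,|\chi_f(m/f)|\,f^{1/2}$. Here $|\mu(m/f)|=1$ precisely when $m/f=n/(df)$ is squarefree and is $0$ otherwise, while $|\chi_f(m/f)|=1$ precisely when $m/f$ is a unit modulo $f$, i.e. $\gcd(m/f,f)=1$, and is $0$ otherwise. Thus $|G(\mathbb{Z}_{n/d},\chi',\lambda)|=f^{1/2}$ when $n/(df)$ is squarefree and coprime to $f$, and $0$ otherwise. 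Raising to the $r$-th power and substituting back into the absolute value of \eqref{f25(1)} yields \eqref{f33}. The only point requiring genuine care is the bookkeeping of these two nonvanishing conditions — squarefreeness coming from the Möbius factor and coprimality from the character value — since these are exactly what produce the side conditions in the statement; the reduction to $a_0=1$ via \eqref{f17} is the one small preliminary step that must be made explicit before \eqref{f18} can be applied.
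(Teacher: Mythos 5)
Your proof is correct and is essentially the paper's own argument: the paper's proof is the one-line instruction to combine \eqref{f25(1)}, \eqref{f17}, \eqref{f18} and \eqref{f19}, and you have carried out exactly that combination, making explicit the reduction to $a_0=1$ and the bookkeeping of the squarefreeness condition from $|\mu(n/(df))|$ and the coprimality condition from $|\chi_f(n/(df))|$ that the paper leaves implicit.
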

\begin{proof} Combining \eqref{f25(1)}, (\ref{f17}), (\ref{f18}) and (\ref{f19}), we get the desired result.
\end{proof}
Next, we consider the case of special linear groups.

\begin{thm} Assume the order of $\lambda$ is $n$, i.e. $d={\rm gcd}(a,n)=1$ in \eqref{f14}. Then
\begin{equation}\label{f34}
G({\rm SL}_r(\mathbb{Z}_{n}),\lambda)=n^{\frac{r(r-1)}{2}}K_r(\mathbb{Z}_{n},\lambda),
\end{equation}
where
\begin{equation}\label{f35}
K_r(\mathbb{Z}_{n},\lambda)=\sum_{\mbox{\tiny$\begin{array}{c}
x_1\cdots x_r=1\\
x_1, \ldots,x_r\in \mathbb{Z}_n
\end{array}$}}\lambda(x_1+x_2+ \cdots+x_r)
\end{equation}
is the hyper-Kloosterman sum over $\mathbb{Z}_{n}$.
\end{thm}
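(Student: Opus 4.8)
The plan is to mimic the averaging argument of Theorem \ref{Thm1}, but over the Borel subgroup of ${\rm SL}_r(\Bbb Z_n)$ rather than that of ${\rm GL}_r(\Bbb Z_n)$. Let ${\rm B}_r^1(\Bbb Z_n)$ denote the group of upper triangular matrices in ${\rm SL}_r(\Bbb Z_n)$. Since such a matrix has determinant equal to the product of its diagonal entries, its diagonal $(b_{11},\ldots,b_{rr})$ ranges over tuples in $(\Bbb Z_n^*)^r$ with $b_{11}\cdots b_{rr}=1$, while its strictly upper triangular entries are free; hence $|{\rm B}_r^1(\Bbb Z_n)|=\varphi(n)^{r-1}n^{r(r-1)/2}$. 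Because ${\rm B}_r^1(\Bbb Z_n)\subseteq {\rm SL}_r(\Bbb Z_n)$, for each fixed $B$ the map $X\mapsto BX$ permutes ${\rm SL}_r(\Bbb Z_n)$, so averaging \eqref{f12} over ${\rm B}_r^1(\Bbb Z_n)$ and interchanging the order of summation gives
\begin{equation*}
G({\rm SL}_r(\Bbb Z_n),\lambda)=\frac{1}{\varphi(n)^{r-1}n^{r(r-1)/2}}\sum_{X\in {\rm SL}_r(\Bbb Z_n)}\sum_{B\in {\rm B}_r^1(\Bbb Z_n)}\lambda({\rm tr}\ BX).
\end{equation*}

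Next I would expand ${\rm tr}\ BX=\sum_{i=1}^r b_{ii}x_{ii}+\sum_{i<j}b_{ij}x_{ji}$, exactly as in the passage leading to \eqref{f21}, and factor $\lambda({\rm tr}\ BX)$ into diagonal and off-diagonal contributions. The strictly upper triangular entries $b_{ij}$ ($i<j$) still run freely over $\Bbb Z_n$, so the orthogonality relation \eqref{f22} applies verbatim: each factor $\sum_{b_{ij}\in\Bbb Z_n}\lambda(b_{ij}x_{ji})$ equals $n$ if $x_{ji}=0$ and $0$ otherwise. Consequently only matrices $X$ whose below-diagonal entries all vanish survive, i.e. $X$ must itself lie in ${\rm B}_r^1(\Bbb Z_n)$, and each surviving $X$ picks up a factor $n^{r(r-1)/2}$ from these off-diagonal sums.

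The one genuinely new step is the diagonal sum, where the determinant-one constraint couples the entries. For a surviving $X\in {\rm B}_r^1(\Bbb Z_n)$ with diagonal $(x_{11},\ldots,x_{rr})$ satisfying $\prod_{i=1}^r x_{ii}=1$, the diagonal contribution is
\begin{equation*}
\sum_{\substack{b_{11},\ldots,b_{rr}\in\Bbb Z_n^*\\ b_{11}\cdots b_{rr}=1}}\prod_{i=1}^r\lambda(b_{ii}x_{ii}),
\end{equation*}
which I would evaluate by the substitution $y_i=b_{ii}x_{ii}$. Because $\prod_{i=1}^r x_{ii}=1$, this is a bijection of the set of unit tuples of product $1$ onto itself, so the sum becomes $\sum_{y_1\cdots y_r=1}\lambda(y_1+\cdots+y_r)=K_r(\Bbb Z_n,\lambda)$; in particular it does not depend on the diagonal of $X$. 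Hence every surviving $X$ contributes $n^{r(r-1)/2}K_r(\Bbb Z_n,\lambda)$. As there are exactly $|{\rm B}_r^1(\Bbb Z_n)|=\varphi(n)^{r-1}n^{r(r-1)/2}$ such matrices, summing this constant value and dividing by $|{\rm B}_r^1(\Bbb Z_n)|$ leaves $n^{r(r-1)/2}K_r(\Bbb Z_n,\lambda)$, which is \eqref{f34}.

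The main obstacle is precisely this coupled diagonal sum: unlike the ${\rm GL}_r$ case in \eqref{f22(1)}, the diagonal entries of $B$ here are not independent, so the diagonal factor cannot be split into $r$ copies of a one-dimensional Gauss sum. Recognizing that the product-one substitution $y_i=b_{ii}x_{ii}$ both preserves the determinant-one locus and collapses the sum to $K_r(\Bbb Z_n,\lambda)$ — crucially independent of $X$ — is the step that produces the hyper-Kloosterman sum in place of a power of a classical Gauss sum.
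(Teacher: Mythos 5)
Your proposal is correct and is essentially identical to the paper's own proof: the paper also averages over the Borel subgroup $\widetilde{\rm B}_r(\Bbb Z_n)$ of ${\rm SL}_r(\Bbb Z_n)$, kills the non-upper-triangular $X$ via the orthogonality relation \eqref{f38}, and concludes with the same observation that $x_{11}\cdots x_{rr}=1$ lets the constrained diagonal sum collapse to $K_r(\Bbb Z_n,\lambda)$. Your substitution $y_i=b_{ii}x_{ii}$ simply spells out in detail the step the paper compresses into its final sentence after \eqref{f38(1)}.
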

\begin{proof} Let $\widetilde {\rm B}_r(\Bbb Z_n)$ be the Borel subgroup of ${\rm SL}_r(\Bbb Z_n)$, i.e., the group of upper triangular matrices with determinate 1 over $\Bbb Z_n$. Averaging equation \eqref{f12} over $\widetilde {\rm B}_r(\Bbb Z_n)$ and changing the order of summation, we get
\begin{equation}\label{f36}
\sum_{X\in {\rm SL}_r(\mathbb{Z}_{n})}\lambda(\mathrm{tr}~X)=\frac{1}{\varphi(n)^{r-1} n^{\frac{r(r-1)}{2}}}\sum_{X\in {\rm SL}_r(\mathbb{Z}_{n})}\sum_{B\in \widetilde {\rm B}_r(\mathbb{Z}_n)}\lambda(\mathrm{tr}~BX)
\end{equation}

Substituting $X=(x_{ij})$ and $B=(b_{ij})$ into \eqref{f36}
and using the multiplicative property of $\lambda$, we obtain
\begin{equation}\label{f37}\begin{split}
&\frac{1}{\varphi(n)^{r-1} n^{\frac{r(r-1)}{2}}}\sum_{X\in {\rm SL}_r(\mathbb{Z}_{n})}\sum_{B\in \widetilde {\rm B}_r(\mathbb{Z}_n)}\prod_{i\leqslant j}\lambda(b_{ij}x_{ji})\\
=&\frac{1}{\varphi(n)^{r-1} n^{\frac{r(r-1)}{2}}}\sum_{X\in{{\rm SL}_r(\mathbb{Z}_{n})}}\sum_{b_{11}\cdots b_{rr}=1}\lambda(\sum_{i=1}^rb_{ii}x_{ii})\cdot\prod_{i<j}\sum_{b_{ij}\in{\mathbb{Z}_{n}}}\lambda(b_{ij}x_{ji})
\end{split}\end{equation}

By the assumption $d={\rm gcd}(a,n)=1$, we have

\begin{equation}\label{f38}
\sum_{b_{ij}\in\mathbb{Z}_n}\lambda(b_{ij}x_{ji})=\begin{cases}
n,&\text{$x_{ji}=0$},\\
0,&
\text{$x_{ji}\neq0$}.
\end{cases}
\end{equation}
Therefore, the summation item for $X\in{{\rm SL}_r(\Bbb Z_n)}$ in (\ref{f37}) is nonzero only if $X\in{\widetilde {\rm B}_r(\Bbb Z_n)}$. Substituting (\ref{f38}) into (\ref{f37}), we get

\begin{equation}\label{f38(1)}
G({\rm SL}_r(\Bbb Z_n),\lambda)=\frac{1}{\varphi(n)^{r-1}}\sum_{X\in \widetilde {\rm B}_r(\mathbb{Z}_n)}\sum_{b_{11}\cdots b_{rr}=1}\lambda(b_{11}x_{11}+ \cdots + b_{rr}x_{rr}).
\end{equation}
As $x_{11}x_{22}\cdots x_{rr}=1$, we get the desired identity (\ref{f34}).
\end{proof}
\begin{thm} \label{Thm5}Let $n/d$ be the order of $\lambda$. Then

\begin{equation}\label{f39}
G({\rm SL}_r(\mathbb{Z}_{n}),\lambda)=n^{r(r-1)/2}d^{r(r+1)/2-1}\prod_{\mbox{\tiny$\begin{array}{c}
p\mid n\\
p\nmid n/d
\end{array}$}}\prod^r_{i=2}(1-p^{-i})\cdot K_r(\mathbb{Z}_{n/d},\lambda)
\end{equation}
where $K_r(\mathbb{Z}_{n/d},\lambda)$ is the hyper-Kloosterman sum over $\Bbb Z_{n/d}$ defined in \eqref{f35}.
\end{thm}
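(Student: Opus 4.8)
The plan is to mirror the passage from Theorem \ref{Thm1} to Theorem \ref{Thm2}. Having just established the case $d=\gcd(a,n)=1$, namely $G(\mathrm{SL}_r(\mathbb{Z}_n),\lambda)=n^{r(r-1)/2}K_r(\mathbb{Z}_n,\lambda)$ in \eqref{f34}, I would reduce the general case to it through the natural reduction homomorphism
\[
\pi:\mathrm{SL}_r(\mathbb{Z}_n)\longrightarrow \mathrm{SL}_r(\mathbb{Z}_{n/d}),\qquad X\mapsto X\bmod n/d.
\]
Writing $a=d a_0$ with $\gcd(a_0,n/d)=1$, the character $\lambda(x)=\exp(2\pi\sqrt{-1}ax/n)=\exp(2\pi\sqrt{-1}a_0 x/(n/d))$ is well defined modulo $n/d$ and has order exactly $n/d$ there, so on $\mathbb{Z}_{n/d}$ it satisfies the hypothesis of the $d=1$ case. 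Since $X\equiv X'\pmod{n/d}$ forces $\mathrm{tr}\,X\equiv\mathrm{tr}\,X'\pmod{n/d}$, the summand $\lambda(\mathrm{tr}\,X)$ is constant on the fibers of $\pi$; hence, once $\pi$ is known to be surjective, $G(\mathrm{SL}_r(\mathbb{Z}_n),\lambda)=|\ker\pi|\cdot G(\mathrm{SL}_r(\mathbb{Z}_{n/d}),\lambda)$, in exact analogy with \eqref{f29}.

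To evaluate $|\ker\pi|$ I first need the order of $\mathrm{SL}_r(\mathbb{Z}_{p^m})$. The determinant gives a surjection $\mathrm{GL}_r(\mathbb{Z}_{p^m})\to\mathbb{Z}_{p^m}^*$ with kernel $\mathrm{SL}_r(\mathbb{Z}_{p^m})$, so Lemma \ref{lemma1} together with $\varphi(p^m)=p^m(1-p^{-1})$ yields $|\mathrm{SL}_r(\mathbb{Z}_{p^m})|=p^{m(r^2-1)}\prod_{i=2}^r(1-p^{-i})$, the $i=1$ factor cancelling against $\varphi(p^m)$. By the Chinese Remainder Theorem this gives $|\mathrm{SL}_r(\mathbb{Z}_n)|=n^{r^2-1}\prod_{p\mid n}\prod_{i=2}^r(1-p^{-i})$. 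Dividing the orders for $n$ and $n/d$ then produces
\[
|\ker\pi|=d^{r^2-1}\prod_{\substack{p\mid n\\ p\nmid n/d}}\prod_{i=2}^r(1-p^{-i}),
\]
which is the $\mathrm{SL}$-analogue of \eqref{f27}, with the inner product starting at $i=2$.

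For the surjectivity of $\pi$ I would argue elementarily, sidestepping the fact that the determinant-one constraint makes it less automatic than the $\mathrm{GL}$ case of \eqref{f26(1)}: given $\bar X\in\mathrm{SL}_r(\mathbb{Z}_{n/d})$, lift it to some $X_0\in\mathrm{GL}_r(\mathbb{Z}_n)$ using the surjectivity of the $\mathrm{GL}$ reduction map; then $u:=\det X_0$ is a unit of $\mathbb{Z}_n$ with $u\equiv1\pmod{n/d}$, so multiplying the first row of $X_0$ by $u^{-1}$ produces $X\in\mathrm{SL}_r(\mathbb{Z}_n)$ with $\pi(X)=\bar X$, the reduction being unchanged because $u^{-1}\equiv1\pmod{n/d}$. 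With surjectivity in hand, $\pi$ partitions $\mathrm{SL}_r(\mathbb{Z}_n)$ into $|\ker\pi|$-element fibers over $\mathrm{SL}_r(\mathbb{Z}_{n/d})$, justifying the factorization of the Gauss sum above. This surjectivity step is the only genuinely new point beyond what Theorem \ref{Thm2} already supplies, and I expect it to be the main (albeit modest) obstacle.

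Finally I would substitute the $d=1$ formula \eqref{f34} applied to $\mathbb{Z}_{n/d}$, namely $G(\mathrm{SL}_r(\mathbb{Z}_{n/d}),\lambda)=(n/d)^{r(r-1)/2}K_r(\mathbb{Z}_{n/d},\lambda)$, obtaining
\[
G(\mathrm{SL}_r(\mathbb{Z}_n),\lambda)=d^{r^2-1}\!\!\prod_{\substack{p\mid n\\ p\nmid n/d}}\prod_{i=2}^r(1-p^{-i})\cdot (n/d)^{r(r-1)/2}K_r(\mathbb{Z}_{n/d},\lambda),
\]
and then collecting the powers via $d^{r^2-1}(n/d)^{r(r-1)/2}=n^{r(r-1)/2}d^{(r-1)(r+2)/2}=n^{r(r-1)/2}d^{r(r+1)/2-1}$, using $(r-1)(r+2)/2=r(r+1)/2-1$. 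This is precisely \eqref{f39}, completing the reduction to the already-proved case.
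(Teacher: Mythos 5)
Your proof is correct and follows the paper's own argument essentially step for step: the same reduction homomorphism $\pi:\mathrm{SL}_r(\mathbb{Z}_n)\to\mathrm{SL}_r(\mathbb{Z}_{n/d})$, the same kernel count via $|\mathrm{SL}_r(\mathbb{Z}_n)|=|\mathrm{GL}_r(\mathbb{Z}_n)|/|\mathbb{Z}_n^*|$ and Lemma \ref{lemma2}, the same fiber-constancy of $\lambda(\mathrm{tr}\,X)$, and the same substitution of the $d=1$ case \eqref{f34}. The only divergence is that where the paper cites Shimura (Lemma 1.38 of \cite{Shimura}) for the surjectivity of $\pi$, you supply a correct elementary lift-and-rescale argument (lift through $\mathrm{GL}_r$, then multiply one row by $(\det X_0)^{-1}\equiv 1\pmod{n/d}$), a self-contained substitute for the citation.
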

\begin{proof} Consider the natural homomorphism
\begin{equation}\label{f39(1)}
{\rm SL}_r(\mathbb{Z}_n)\xrightarrow{\pi}{\rm SL}_r(\mathbb{Z}_{n/d}),~X\mapsto X\ {\rm mod}\ n/d,~\mathrm{for}~X\in {\rm SL}_r(\mathbb{Z}_n)
\end{equation}
By a result of Shimura (see the proof of Lemma 1.38 of \cite{Shimura} on page 21), we know that $\pi$ is surjective.

 Clearly
\begin{equation}\label{f39(2)}
|{\rm SL}_r(\mathbb{Z}_n)|=|{\rm GL}_r(\mathbb{Z}_n)|/|\mathbb{Z}_n^*|.
\end{equation}
Therefore, by Lemma 2, we have
\begin{equation}\label{f40}
|{\rm SL}_r(\mathbb{Z}_n)|=n^{r^2-1}\prod_{p\mid n}\prod^r_{i=2}(1-p^{-i}).
\end{equation}

Combining the subjectivity of $\pi$ and \eqref{f40}, we get
\begin{equation}\label{f41}
|\ker \pi|=d^{r^2-1}\prod_{\mbox{\tiny$\begin{array}{c}
p\mid n\\
p\nmid n/d
\end{array}$}}\prod^r_{i=2}(1-p^{-i})
\end{equation}

Note that for $X, X'\in{{\rm SL}_r(\Bbb Z_n)}$, if $\pi(X)=\pi(X')$, i.e. $X\equiv X'\pmod{n/d}$, then ${\rm tr}\ X\equiv {\rm tr}\ X' \pmod{ n/d}$, which implies $$\lambda({\rm tr}\ X)=\lambda({\rm tr}\ X')=\lambda({\rm tr}\ \pi(X)).$$ Therefore,
\begin{equation}\label{f42}
G({\rm SL}_r(\mathbb{Z}_{n}),\lambda)=|\ker \pi|\cdot G({\rm SL}_r(\mathbb{Z}_{n/d}),\lambda).
\end{equation}

Substituting (\ref{f34}) and (\ref{f41}) into (\ref{f42}), we get the desired identity.
\end{proof}
\begin{rmk}
In \cite{Smith} and \cite{Fisher}, some bounds of hyper-Kloosterman sums $K_r(\Bbb Z_n,\lambda)$ are given. Using their results, we can derive bounds of Gauss sums $G({\rm SL}_r(\mathbb{Z}_{n}),\lambda)$ by Theorem \ref{Thm5}.

For convinience of readers, here we rewrite their results by our notations.

Let $n=p_1^{m_1}p_2^{m_2}\cdots p_s^{m_s}$ be the prime factorization of $n$. Denote $p_i^{m_i}$ by $q_i$, for $1\leqslant i\leqslant s$. Let $w_1,w_2,\cdots,w_s\in{\Bbb Z}$ satisfy

\begin{equation}\label{f43}
w_1\frac{n}{q_1}+w_2\frac{n}{q_2}+\cdots+w_s\frac{n}{q_s}=1
\end{equation}
and denote $w_i n/q_i$ by $e_i$, where $1\leqslant i\leqslant s$. By Theorem 1 of \cite{Smith}, we have
\begin{equation}\label{f44}
K_r(\mathbb{Z}_{n},\lambda)=K_r(\mathbb{Z}_{q_1},\lambda_{e_1})\cdots K_r(\mathbb{Z}_{q_s},\lambda_{e_s})
\end{equation}
where $\lambda_{e_i}$ is a character of $\Bbb Z_{q_i}$ such that
\begin{equation}\label{f45}
\lambda_{e_i}(x)=\lambda(e_ix),~\forall\ x\in\mathbb{Z}_{q_i}
\end{equation}
where $1\leqslant i\leqslant s$.

Now assume the order of $\lambda$ is $n$, i.e., $d={\rm gcd}(a,n)=1$

Then by Theorem 6 of \cite{Smith}, we have
\begin{equation}\label{f46}
|K_r(\mathbb{Z}_{n},\lambda)|\leqslant n^{\frac{r-1}{2}}d_r(n)
\end{equation}
where $d_r(n)$ denotes the number of representations of $n$ as a product of $r$ factors. Explicitly,
\begin{equation}\label{f47}
d_r(n)=\prod_{p^m\|n}
\begin{pmatrix} m+r-1 \\ m \end{pmatrix}
\end{equation}

Next, we will state the bounds of \cite{Fisher}.

By (\ref{f44}), to bound $K_r(\Bbb Z_n,\lambda)$, it suffices to consider the case $n=p^m$ being a prime power. We still assume the order of $\lambda$ is $n$, which is sufficient for our purpose.

Let $h=v_p(r)$ be the exact power of $p$ dividing $r$ and $\lfloor x\rfloor$ be the floor function, i.e. the greatest integer $\leqslant x$. From Example 1.17 of \cite{Fisher}, we know that
\begin{equation*}\label{f48}
\begin{split}
&|K_r(\mathbb{Z}_{p^m},\lambda)|\\
\leqslant&\begin{cases}
rp^{\frac{(r-1)}{2}},&\text{$m=1$};\\
rp^{\frac{m(r-1)}{2}},&\text{$h=0$};\\
p^{v_p(2)-h/2}rp^{\frac{m(r-1)}{2}},&\text{$h>0$~and~$m\geqslant3h+2+4v_p(2)$};\\
p^{v_p(2)+\min\{h,\lfloor m/2\rfloor-1-v_p(2)\}}{\rm gcd}(r,p-1)p^{\frac{m(r-1)}{2}},&\text{otherwise}.
\end{cases}
\end{split}
\end{equation*}

In the above inequality, the case of $m=1$ is originally due to Deligne \cite{Deligne}.
\end{rmk}

\section{Applications}

For $\beta\in\mathbb{Z}_{n}$, let
\begin{equation}\label{f49}
N_{\beta}=\left|\{x\in {\rm GL}_r(\mathbb{Z}_{n})\big|{\rm tr}\ X=\beta\}\right|
\end{equation}
In this section, we will calcute $N_{\beta}$ by applying Theorem \ref{Thm3}.

Note that for $c\in{\Bbb Z_n^*}$, $N_{c\beta}=N_{\beta}$ since multiplication by $c$ is a bijection of ${\rm GL}_r(\Bbb Z_n)$. Therefore,
\begin{equation}\label{f50}
N_{\beta}=N_{{\rm gcd}(\beta,n)}.
\end{equation}
and only $N_l$ for $l|n$ needs to be computed.

For our purpose, we also need some knowledge of Ramanujan sums (see p.1-2 of \cite{Toth}). By definition, the Ramanujan sum $C_n(k)$ is the sum of $k$-th powers of $n$-th primitive roots of unity ($k\in{\Bbb Z}$), i.e.
\begin{equation}\label{f51}
C_n(k)=\sum_{j\in{\Bbb Z_n^*}}{\rm exp}\left(\frac{j k 2\pi\sqrt{-1}}{n}\right).
\end{equation}

Clearly, Ramanujan sums are Gauss sums with trivial multiplicative characters $\chi=1$, i.e.
\begin{equation}\label{f52}
C_n(k)=G(\Bbb Z_n, 1,\lambda_{k}),
\end{equation}
where $\lambda_{k}$ is the character of $\Bbb Z_n$ such that
\begin{equation}\label{f53}
\lambda_k(x)={\rm exp}\left(\frac{kx 2\pi\sqrt{-1}}{n}\right),~\forall\ x\in\mathbb{Z}_{n}
\end{equation}

It can also be expressed as
\begin{equation}\label{f54}
C_n(k)=\sum_{d|{\rm gcd}(k,n)}d\mu(n/d).
\end{equation}

The Ramanujan sums enjoy the following orthogonal properties:
\begin{equation}\label{f55}
\frac{1}{{\rm lcm}(l,n)}\sum^{{\rm lcm}(l,n)}_{k=1}C_l(k)C_n(k)=\begin{cases}
\varphi(n),&\text{if~$l=n$},\\
0,&\text{otherwise}.
\end{cases}
\end{equation}
where ${\rm lcm}$ represents the least common multiple.

\begin{thm}\label{Thm6} For $\beta\in\Bbb Z_n$, let $l={\rm gcd}(\beta,n)$. Then,
\begin{equation}\label{f55(1)}
\begin{split}
N_{\beta}=\frac{1}{\varphi(n/l)}&n^{r(r-1)/2-1}\sum_{d\mid n}C_{n/l}(d)C_{n}^r(d)\varphi(n/d)\\ &\times d^{r(r-1)/2}\prod_{\mbox{\tiny$\begin{array}{c}
p\mid n\\
p\nmid n/d
\end{array}$}}\prod^r_{i=1}(1-p^{-i})/(1-p^{-1})
\end{split}
\end{equation}
\end{thm}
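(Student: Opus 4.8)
The plan is to recover $N_\beta$ by Fourier inversion on the additive group $\mathbb Z_n$, treating the Gauss sums $G(\mathrm{GL}_r(\mathbb Z_n),1,\lambda_a)$ (with the trivial multiplicative character $\chi=1$) as known quantities supplied by Theorem \ref{Thm3}, and then inverting the resulting linear system via the orthogonality \eqref{f55}. First I would expand the Gauss sum against the trace distribution: for $\lambda_a(x)=\exp(2\pi\sqrt{-1}ax/n)$,
$$G(\mathrm{GL}_r(\mathbb Z_n),1,\lambda_a)=\sum_{X\in\mathrm{GL}_r(\mathbb Z_n)}\lambda_a(\mathrm{tr}\,X)=\sum_{\beta\in\mathbb Z_n}N_\beta\,\lambda_a(\beta).$$
Grouping $\beta$ by $\gcd(\beta,n)$ and using $N_\beta=N_{\gcd(\beta,n)}$ from \eqref{f50}, together with $\sum_{\gcd(\beta,n)=l}\lambda_a(\beta)=C_{n/l}(a)$ (write $\beta=l\beta'$ with $\beta'\in\mathbb Z_{n/l}^*$ and apply \eqref{f51}), this yields $G(\mathrm{GL}_r(\mathbb Z_n),1,\lambda_a)=\sum_{l\mid n}N_l\,C_{n/l}(a)$.

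Next I would evaluate the same Gauss sum by Theorem \ref{Thm3}. The trivial character has conductor $f=1$, so $f\mid n/d$ holds for every $a$; setting $d=\gcd(a,n)$ and using $G(\mathbb Z_n,1,\lambda_a)=C_n(a)=C_n(d)$ from \eqref{f52} and \eqref{f54}, Theorem \ref{Thm3} produces a closed form depending on $a$ only through $d$, namely $F(d)=n^{r(r-1)/2}d^{r(r-1)/2}\prod_{p\mid n,\,p\nmid n/d}\prod_{i=1}^r\frac{1-p^{-i}}{1-p^{-1}}\,C_n(d)^r$. Comparing the two evaluations gives, for every $a$, the identity $\sum_{l\mid n}N_l\,C_{n/l}(a)=F(\gcd(a,n))$, a linear system in the unknowns $N_l$ whose coefficient matrix is built from Ramanujan sums.

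To invert it I would multiply by $C_{n/l}(a)$ and sum $a$ over a complete residue system $a=1,\dots,n$. The left side is handled by an orthogonality computation: applying \eqref{f55} to the moduli $n/l'$ and $n/l$ (both dividing $n$) and using that $C_{n/l'}(a)C_{n/l}(a)$ is periodic with period $\mathrm{lcm}(n/l',n/l)\mid n$, one gets $\sum_{a=1}^n C_{n/l'}(a)C_{n/l}(a)=n\varphi(n/l)\,\delta_{l',l}$, so the left side collapses to $N_l\cdot n\varphi(n/l)$. For the right side I need the auxiliary identity $\sum_{\gcd(a,n)=d}C_{n/l}(a)=\varphi(n/d)\,C_{n/l}(d)$, which I would prove by observing that, since $n/l\mid n$, a prime-by-prime valuation check shows $\gcd(a,n/l)=\gcd(d,n/l)$ is constant on the whole class $\{\,a:\gcd(a,n)=d\,\}$; hence $C_{n/l}(a)=C_{n/l}(d)$ throughout the class, which has $\varphi(n/d)$ elements. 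Equating the two sides yields $N_l=\frac{1}{n\varphi(n/l)}\sum_{d\mid n}F(d)\,\varphi(n/d)\,C_{n/l}(d)$, and substituting the expression for $F(d)$ and writing $l=\gcd(\beta,n)$ reproduces exactly \eqref{f55(1)}.

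The main obstacle is the pair of Ramanujan-sum steps. The delicate point is the orthogonality computation, where one must pass from the $\mathrm{lcm}$-normalized statement \eqref{f55} (which compares two different moduli) to a clean sum over the single common period $n$, justifying the periodicity and the factor $n/\mathrm{lcm}(n/l',n/l)$. The auxiliary identity for $\sum_{\gcd(a,n)=d}C_{n/l}(a)$ is conceptually the crux of the inversion, but it dissolves once one notices that $C_{n/l}$ depends on its argument only through $\gcd(\cdot,n/l)$, which is constant on each $\gcd$-class; the remaining manipulations are routine bookkeeping.
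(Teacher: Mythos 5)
Your proposal is correct and follows essentially the same route as the paper: expand $G(\mathrm{GL}_r(\mathbb Z_n),1,\lambda_a)=\sum_{l\mid n}N_l\,C_{n/l}(a)$ by grouping traces by $\gcd$ with $n$, evaluate the same sum via Theorem \ref{Thm3}, and invert using the Ramanujan-sum orthogonality \eqref{f55}. The paper phrases the inversion as a linear system \eqref{f60} with orthogonal columns rather than multiplying by $C_{n/l}(a)$ and summing, and it uses your auxiliary identity implicitly (noting the summand depends on $a$ only through $\gcd(a,n)$), but these are the same computation.
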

\begin{proof} By definition, $G({\rm GL}_r(\Bbb Z_n),1,\lambda_k)$ equals to

\begin{equation}\label{f56}
\begin{split}
&\sum_{X\in {\rm GL}_r(\mathbb{Z}_{n})}\lambda_k(\mathrm{tr}~X)\\
=&\sum_{l\mid n}\sum_{\gcd({\rm tr}~X,n)=l}\lambda_k(\mathrm{tr}~X)\\
=&\sum_{l\mid n}N_l\times\sum_{\mbox{\tiny$\begin{array}{c}j=1\\\gcd(j,n/l)=1\end{array}$}}^{n/l}{\rm exp}\left(\frac{kjl2\pi\sqrt{-1}}{n}\right).
\end{split}
\end{equation}
The last equality is by using (\ref{f50}), replacing ${\rm tr}\ X=jl$, and using (\ref{f53}).
Substituting (\ref{f51}) into (\ref{f56}) with $n$ by $n/l$, we get
\begin{equation}\label{f57}
\sum_{l\mid n}N_lC_{n/l}(k)=G({\rm GL}_r(\mathbb{Z}_{n}),1,\lambda_k).
\end{equation}
Substituting Theorem \ref{Thm3} into (\ref{f57}) and using \eqref{f52}, we get
\begin{equation}\label{f58}
\sum_{l\mid n}N_lC_{n/l}(k)=n^{\frac{r(r-1)}{2}}C_n^r(k)A_{n}(k)
\end{equation}
where
\begin{equation}\label{f59}
A_{n}(k)=d^{\frac{r(r-1)}{2}}\prod_{\mbox{\tiny$\begin{array}{c}
p\mid n\\
p\nmid n/d
\end{array}$}}\prod^r_{i=1}(1-p^{-i})/(1-p^{-1})\ \mathrm{with}\ d=\gcd(k,n).
\end{equation}

Letting $k$ be $1,2,\cdots,n$ in equation (\ref{f58}), we get a system of linear equations:

\begin{equation}\label{f60}
\begin{pmatrix}
 C_n(1) &\cdots & C_{n/l}(1) &\cdots & C_1(1)\\
 C_n(2) &\cdots & C_{n/l}(2) &\cdots & C_1(2)\\
 \vdots& &\vdots& &\vdots\\
 C_n(k) &\cdots & C_{n/l}(k) &\cdots & C_1(k)\\
  \vdots& &\vdots& &\vdots\\
 C_n(n) &\cdots & C_{n/l}(n) &\cdots & C_1(n)
 \end{pmatrix}
\begin{pmatrix}
N_1 \\
\vdots\\
N_l \\
\vdots\\
N_n
\end{pmatrix}
=n^{\frac{r(r-1)}{2}}
\begin{pmatrix}
C_n^r(1)A_{n}(1)\\
C_n^r(2)A_{n}(2)\\
\vdots\\
C_n^r(k)A_{n}(k)\\
\vdots\\
C_n^r(n)A_{n}(n)
\end{pmatrix}
\end{equation}

By (\ref{f55}), the columns of coefficient matrix in (\ref{f60}) are orthognal. Taking the inner product, we get
\begin{equation}\label{f61}
l\cdot n/l\cdot \varphi(n/l)\cdot N_l=n^{\frac{r(r-1)}{2}}\sum_{k=1}^nC_{n/l}(k)C_n^r(k)A_{n}(k)
\end{equation}
by (\ref{f55}) again. Since the summation item in the right hand side of (\ref{f61}) only depends on the $d={\rm gcd}(k,n)$, we get

\begin{equation}\label{f62}
N_l=\frac{1}{\varphi(n/l)}n^{\frac{r(r-1)}{2}-1}\sum_{d\mid n}C_{n/l}(d)C_n^r(d)A_{n}(d)\varphi(n/d)
\end{equation}

Substituting (\ref{f59}) into (\ref{f62}), we get the desired result.
\end{proof}

The Theorem \ref{Thm6} looks complicated for general $n$. So we calculate the special case of $n$ being a prime power. The result seems to be much simpler.

\begin{thm}\label{Thm7} Let $n=p^m$ be a prime power with $m\geqslant 1$. For $\beta\in{\Bbb Z_n}$, let $p^t={\rm gcd}(\beta,p^m)$ with $0\leqslant t\leqslant m$. Then

\begin{equation}\label{f62(1)}
N_{\beta}=
\begin{cases}
p^{m(r^2-1)}\left(\prod_{i=1}^r(1-p^{-i})-(-1)^rp^{-r(r+1)/2}\right),&\text{if~$t=0$},\\
p^{m(r^2-1)}\left(\prod_{i=1}^r(1-p^{-i})+(-1)^rp^{-r(r+1)/2}(p-1)\right),&\text{otherwise}.
\end{cases}
\end{equation}
\end{thm}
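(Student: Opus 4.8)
The plan is to specialize the general formula of Theorem~\ref{Thm6} to $n=p^m$ and to exploit the fact that Ramanujan sums at prime powers vanish outside a very narrow range. Writing $l=p^t$ with $0\le t\le m$, the divisors $d$ of $n=p^m$ are exactly $d=p^j$ for $0\le j\le m$, so the sum in \eqref{f55(1)} becomes a sum over $j$. First I would record, directly from \eqref{f54}, the explicit prime-power values
\[
C_{p^a}(p^b)=
\begin{cases}
\varphi(p^a), & a\le b,\\
-p^{a-1}, & a=b+1,\\
0, & a\ge b+2,
\end{cases}
\]
which follow at once because $\mu(p^a/e)$ is nonzero only for $e\in\{p^{a-1},p^{a}\}$.

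The key observation is then that the factor $C_n^r(d)=C_{p^m}(p^j)^r$ appearing in \eqref{f55(1)} vanishes whenever $j\le m-2$, so only the two terms $j=m-1$ and $j=m$ survive and the whole sum collapses to two summands. I would next read off the remaining factors for these two values. For $d=p^m$ the auxiliary product over primes $p\mid n$, $p\nmid n/d$ is the single block $\prod_{i=1}^r(1-p^{-i})/(1-p^{-1})$, while for $d=p^{m-1}$ it is empty and equals $1$; moreover $\varphi(n/p^m)=1$ and $\varphi(n/p^{m-1})=p-1$. The last ingredient is $C_{n/l}(p^j)=C_{p^{m-t}}(p^j)$, and here the two cases of the theorem are born: for $j=m-1$ one has $C_{p^{m-t}}(p^{m-1})=\varphi(p^{m-t})$ when $t\ge 1$, but $C_{p^{m}}(p^{m-1})=-p^{m-1}$ when $t=0$. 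This single discrepancy is exactly what produces the split between $t=0$ and $t\ge 1$ in \eqref{f62(1)}.

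Finally I would substitute these values into \eqref{f55(1)}, using $\varphi(p^a)=p^a(1-p^{-1})$ to cancel the factors $(1-p^{-1})$ in the auxiliary product as well as the prefactor $\varphi(p^{m-t})^{-1}$ against the Gauss-sum contribution at $j=m$. The $j=m$ term then contributes $p^{m(r^2-1)}\prod_{i=1}^r(1-p^{-i})$ in both cases. The $j=m-1$ term contributes $(-1)^{r+1}p^{m(r^2-1)}p^{-r(r+1)/2}$ when $t=0$ and $(-1)^{r}(p-1)p^{m(r^2-1)}p^{-r(r+1)/2}$ when $t\ge1$; adding the two summands yields precisely the two branches of \eqref{f62(1)}, since $(-1)^{r+1}=-(-1)^r$.

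The computations are elementary once the reduction to two terms is in place, and the main obstacle is purely the exponent bookkeeping. Concretely, one must verify the two identities
\[
m\Big(\tfrac{r(r-1)}{2}-1\Big)+mr+\tfrac{mr(r-1)}{2}=m(r^2-1)
\]
and
\[
m\Big(\tfrac{r(r-1)}{2}-1\Big)+(m-1)r+\tfrac{(m-1)r(r-1)}{2}=m(r^2-1)-\tfrac{r(r+1)}{2},
\]
which govern the powers of $p$ in the two surviving terms. Keeping careful track of the prefactor $\varphi(p^{m-t})^{-1}p^{m(r(r-1)/2-1)}$ and of the $t$-dependence hidden inside $C_{n/l}$ is the only place where an error is likely to creep in.
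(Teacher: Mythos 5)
Your proposal is correct and follows essentially the same route as the paper: it specializes Theorem~\ref{Thm6} to $n=p^m$, uses the vanishing $C_{p^m}(p^j)=0$ for $j\leqslant m-2$ (the paper's \eqref{f64}) to collapse the sum to the two terms $j=m-1,m$, and resolves the case split via $C_{p^{m-t}}(p^{m-1})$ exactly as in the paper's \eqref{f67}. Your exponent identities and the resulting contributions $p^{m(r^2-1)}\prod_{i=1}^r(1-p^{-i})$ and $(-1)^{r+1}p^{m(r^2-1)-r(r+1)/2}$ (respectively $(-1)^r(p-1)p^{m(r^2-1)-r(r+1)/2}$) all check out against the paper's computation in \eqref{f63}--\eqref{f67}.
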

\begin{proof} By Theorem \ref{Thm6}, we have
\begin{equation}\label{f63}
\begin{split}
N_{\beta}=&\frac{1}{\varphi(p^{m-t})}p^{m(r(r-1)/2-1)}\sum_{u=0}^mC_{p^{m-t}}(p^{u})C_{p^{m}}^r(p^{u})\varphi(p^{m-u})p^{ur(r-1)/2}\\
&\times\prod_{p\nmid p^{m-u}}\prod^r_{i=1}(1-p^{-i})/(1-p^{-1})
\end{split}
\end{equation}

From (\ref{f54}), we have

\begin{equation}\label{f64}
C_{p^{m}}(p^{u})=
\begin{cases}
\varphi(p^{m}),&u\geqslant m,\\
-p^{m-1},&u= m-1,\\
0,&0\leqslant u< m-1.
\end{cases}
\end{equation}
Therefore only for $u=m, m-1,$ the corresponding summation items in (\ref{f63}) are nonzero, which are
\begin{equation}\label{f65}
\varphi(p^{m-t})\varphi(p^{m})^rp^{mr(r-1)/2}\prod^r_{i=1}(1-p^{-i})/(1-p^{-1})
\end{equation}
and
\begin{equation}\label{f66}
C_{p^{m-t}}(p^{m-1})(-p^{m-1})^r\varphi(p)p^{(m-1)r(r-1)/2}
\end{equation}
respectively. By (\ref{f64}), we have
\begin{equation}\label{f67}
C_{p^{m-t}}(p^{m-1})=
\begin{cases}
-p^{m-1},&\text{if~} t=0,\\
\varphi(p^{m-t}),&\text{if~} 0<t\leqslant m.
\end{cases}
\end{equation}

Substituting (\ref{f67}) into (\ref{f66}), then putting (\ref{f65}) and (\ref{f66}) into (\ref{f63}), we get the desired result.
\end{proof}
\begin{rmk} In Theorem \ref{Thm7}, it is interesting to note that $N_{\beta}$ has only two vales depending $p|{\beta}$ or not. This is clearly true in the case of $m=1$. In fact, Theorem 3.1 of \cite{LH} gives the exact value of $N_{\beta}$ for $m=1$, e.g.
\begin{equation}\label{f68}
N_{0}=p^{r^2-1}\left(\prod_{i=1}^r(1-p^{-i})+(-1)^rp^{-r(r+1)/2}(p-1)\right)
\end{equation}

For general $m$, this has the following explanations. Consider the natural homomorphism:
\begin{equation}\label{f68(1)}
{\rm GL}_r(\mathbb{Z}_{p^m})\xrightarrow{\pi}{\rm GL}_r(\Bbb F_p),~X\mapsto X\ {\rm mod}\ p,
\end{equation}
where $X\in{{\rm GL}_r(\Bbb Z_{p^m})}$. Denote $Y=\pi(X)$. Then clearly,
\begin{equation}\label{f68(2)}
{\rm tr}\ Y \equiv {\rm tr}\ X \pmod p,\ {\rm det}\ Y\equiv {\rm det}\ X\pmod p.
\end{equation}

Fixed $Y\in{{\rm GL}_r(\Bbb F_p)}$, let $X_0$ be any matrix int ${\rm GL}_r(\Bbb Z_{p^m})$ such that $X_0\equiv Y\pmod p$. Then
\begin{equation}\label{f69}
{\pi}^{-1}(Y)=\{X_0+Z|Z\in{p {\rm M}_r(\Bbb Z_{p^m})}\}
\end{equation}
since ${\rm det}\ (X_0+Z)\equiv {\rm det}\ Y\not\equiv 0\pmod p$, where ${\rm M}_r(\Bbb Z_{p^m})$ is the set of $r\times r$ square matrices over $\Bbb Z_{p^m}$.

Now assume $p|{\beta}$. If ${\rm tr}\ X=\beta$, then ${\rm tr}\ \pi(X)=0$ in $\Bbb F_p$.

Therefore
\begin{equation}\label{f70}
\{X\in{{\rm GL}_r(\Bbb Z_{p^m})}|{\rm tr}\ X=\beta\}=\bigcup_{{\rm tr}\ Y=0}\{X\in{{\pi}^{-1}(Y)}|{\rm tr}\ X=\beta\}.
\end{equation}

Fixed $Y$, by (\ref{f69}), we have
\begin{equation}\label{f70(1)}
\{X\in{{\pi}^{-1}(Y)}|{\rm tr}\ X=\beta\}=\{X_0+Z|{\rm tr}\ Z=\beta-{\rm tr}\ X_0, Z\in{p {\rm M}_r(\Bbb Z_{p^m})}\}.
\end{equation}
This implies that
\begin{equation}\label{f71}
|\{X\in{{\pi}^{-1}(Y)}|{\rm tr}\ X=\beta\}|=p^{(m-1)(r^2-1)},
\end{equation}
which does not depend on $\beta$.

Combining (\ref{f68}), (\ref{f70}) and (\ref{f71}), we have, for $p|{\beta}$,
\begin{equation}\label{f71(1)}
N_{\beta}=p^{m(r^2-1)}\left(\prod_{i=1}^r(1-p^{-i})+(-1)^rp^{-r(r+1)/2}(p-1)\right).
\end{equation}

 Clearly, for $p\nmid{\beta}$, $N_{\beta}=N_1$ does not depend on $\beta$, which can be easily computed by combining Lemma \ref{lemma1} and equation \eqref{f71(1)}.

Summing up, this actually gives another proof of Theorem \ref{Thm7} based on Theorem 3.1 of \cite{LH}.
\end{rmk}
From the Chinese Remainder Theorem,
\begin{equation*}\label{f71(2)}
{\rm GL}_r(\mathbb{Z}_n)\simeq {\rm GL}_r(\mathbb{Z}_{p_{1}^{m_1}})\times {\rm GL}_r(\mathbb{Z}_{p_{2}^{m_2}})\times \cdots \times {\rm GL}_r(\mathbb{Z}_{p_{s}^{m_s}})
\end{equation*}
where $n=p_1^{m_1}p_2^{m_2}\cdots p_s^{m_s}$ is the prime factorization of $n$. Therefore
\begin{equation}\label{f72}
|\{X\in{{\rm GL}_r(\Bbb Z_n)|{\rm tr}\ X=\beta\}|=\prod_{i=1}^{s}|\{X\in{{\rm GL}_r(\Bbb Z_{p_{i}^{m_i}})}|{\rm tr}\ X=\beta_i}\}|
\end{equation}
where $\beta_i\equiv \beta\ \pmod{ p_{i}^{m_i}}$.

Combining Theorem \ref{Thm7} and (\ref{f72}), we get a product formula of $N_\beta$.

\begin{thm}\label{Thm8} For $\beta\in{\Bbb Z_n}$, let $l={\rm gcd}(n,\beta)$. Then
\begin{equation}\label{f72(1)}
\begin{split}
N_{\beta}=n^{r^2-1}\prod_{p\mid l}&\left(\prod_{i=1}^r(1-p^{-i})+(-1)^rp^{-r(r+1)/2}(p-1)
\right)
\\
\times&\prod_{\mbox{\tiny$\begin{array}{c}
p\mid n\\
p\nmid l
\end{array}$}}\left(\prod_{i=1}^r(1-p^{-i})-(-1)^rp^{-r(r+1)/2}\right)
\end{split}
\end{equation}
\end{thm}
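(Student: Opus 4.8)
The plan is to reduce the statement for general $n$ to the prime-power case already settled in Theorem \ref{Thm7}, exploiting the multiplicativity furnished by the Chinese Remainder Theorem. First I would invoke the factorization \eqref{f72}: writing $n=p_1^{m_1}\cdots p_s^{m_s}$ for the prime factorization and choosing $\beta_i\equiv\beta\pmod{p_i^{m_i}}$ for each $i$, the count $N_\beta$ splits as a product $\prod_{i=1}^s M_i$, where $M_i=|\{X\in{\rm GL}_r(\mathbb{Z}_{p_i^{m_i}}):{\rm tr}\ X=\beta_i\}|$ is exactly the quantity computed in Theorem \ref{Thm7} for modulus $p_i^{m_i}$.

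Next I would apply Theorem \ref{Thm7} to each factor $M_i$. Setting $p_i^{t_i}={\rm gcd}(\beta_i,p_i^{m_i})$, Theorem \ref{Thm7} returns one of two expressions according to whether $t_i=0$ or $t_i>0$, that is, according to whether $p_i\nmid\beta_i$ or $p_i\mid\beta_i$. The crucial bookkeeping observation is that this local dichotomy matches the splitting in \eqref{f72(1)}: since $\beta_i\equiv\beta\pmod{p_i^{m_i}}$ with $m_i\geqslant1$, we have $p_i\mid\beta_i\iff p_i\mid\beta$, and as $p_i\mid n$ this is equivalent to $p_i\mid l$ with $l={\rm gcd}(n,\beta)$. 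Hence the indices with $p_i\mid l$ contribute the ``$t>0$'' expression carrying $+(-1)^rp^{-r(r+1)/2}(p-1)$, while the indices with $p_i\mid n,\ p_i\nmid l$ contribute the ``$t=0$'' expression carrying $-(-1)^rp^{-r(r+1)/2}$.

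Finally I would collect the powers of $p$. Each factor supplied by Theorem \ref{Thm7} carries the common prefactor $p_i^{m_i(r^2-1)}$, independent of the case, so multiplying over $i$ yields $\prod_{i=1}^s p_i^{m_i(r^2-1)}=\bigl(\prod_{i=1}^s p_i^{m_i}\bigr)^{r^2-1}=n^{r^2-1}$, precisely the global prefactor in \eqref{f72(1)}. Pulling this common power to the front and regrouping the remaining parenthetical factors according to the two cases above gives the stated product over $p\mid l$ versus $p\mid n,\ p\nmid l$.

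I expect no substantive obstacle: the entire content resides in Theorem \ref{Thm7} together with the multiplicativity \eqref{f72}, both already in hand. The only point demanding a moment's care is the translation $p_i\mid\beta_i\iff p_i\mid l$, which guarantees that the local case distinction of Theorem \ref{Thm7} is faithfully reproduced by the global partition of the primes dividing $n$; once that identification is recorded, the formula \eqref{f72(1)} follows by direct multiplication.
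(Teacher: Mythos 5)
Your proposal is correct and follows exactly the paper's route: the paper derives Theorem \ref{Thm8} by combining the Chinese Remainder Theorem factorization \eqref{f72} with the prime-power formula of Theorem \ref{Thm7}, just as you do. Your explicit verification that $p_i\mid\beta_i\iff p_i\mid l$ and your collection of the prefactors $p_i^{m_i(r^2-1)}$ into $n^{r^2-1}$ merely spell out details the paper leaves implicit.
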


\end{document}